\documentclass[11pt]{article}
\usepackage{amssymb,amsfonts,amsmath,latexsym,epsf,tikz,url}

\newtheorem{theorem}{Theorem}[section]

\newtheorem{corollary}[theorem]{Corollary}
\newtheorem{lemma}[theorem]{Lemma}
\newtheorem{definition}[theorem]{Definition} 
\newtheorem{example}[theorem]{Example}
\newtheorem{remark}[theorem]{Remark}

\newcommand{\proof}{\noindent{\bf Proof.\ }}
\newcommand{\qed}{\hfill $\square$\medskip}

\textwidth 14.5cm
\textheight 21.0cm
\oddsidemargin 0.4cm
\evensidemargin 0.4cm
\voffset -1cm

\begin{document}

\title{On the  independent domination polynomial of a graph}

\author{Somayeh Jahari$^{}$\footnote{Corresponding author}
\and
Saeid Alikhani 
}

\date{}

\maketitle

\begin{center}
 Department of Mathematics, Yazd University, 89195-741, Yazd, Iran\\

{\tt s.jahari@gmail.com ~~~~ alikhani@yazd.ac.ir}
\end{center}


\begin{abstract}
 An independent dominating set of  the simple graph $G=(V,E)$ is a vertex subset that is both dominating and independent in $G$. The {\it independent domination polynomial} of  a graph $G$ is the polynomial
$D_i(G,x)=\sum_{A} x^{|A|}$,
summed over all independent dominating subsets $A\subseteq V$. A root of $D_i(G,x)$ is called an independence domination root.  We investigate the independent domination polynomials of some generalized compound graphs. As consequences, we construct 
graphs whose independence domination roots are real.  Also, we consider some certain graphs and study the number of their independent dominating sets.
\end{abstract}

\noindent{\bf Keywords:} independent dominating set, independent domination polynomial, root, book graph, friendship graph.

\medskip
\noindent{\bf AMS Subj. Class.:} 05C69

\section{Introduction}

All graphs in this paper are simple of finite orders, i.e., graphs are undirected with no loops or
parallel edges and with finite number of vertices. Let $G$ be a graph. 
A non-empty set $S\subseteq V(G)$ is a {\it dominating set} if every vertex in $V(G)\backslash S$ is adjacent to at least one vertex in $S$ and  the minimum cardinality of all dominating sets of $G$ is called  the {\it domination number} of $G$ and is denoted by $\gamma(G)$.
For a detailed treatment of domination theory, the reader is referred to~\cite{domination}. 

The {\it complement} $\bar{G}$ of a graph $G$ is a graph with the same vertex set as $G$ and with the property that two vertices are adjacent in $\bar{G}$ if and only if they are not adjacent in $G$. 
The {\it line graph} $L(G)$ of a graph $G$ has vertex set $V(L(G))=E(G)$, and two vertices of $L(G)$ are adjacent if and only if they are adjacent as edges of $G$. For a set $S \subseteq V$, the subgraph induced by $S$ is denoted by $\langle S\rangle$.
A graph $G$ is called {\it claw-free} if it contains no an induced subgraph isomorphic to the complete bipartite graph $K_{1,3}$.  
Claw-free graphs form are important and very well-studied class of graphs. There are many natural examples
of claw-free graphs such as line graphs, complements of triangle-free graphs, etc. The complete
set of claw-free graphs has been characterized by  Seymour and Chudnovsky in \cite{i,ii,iii}.


 An {\it independent set} in a graph $G$ is a set of pairwise non-adjacent vertices. A maximum independent set in $G$ is a largest independent set and its size is called {\it independence number} of $G$ and is denoted $\alpha(G)$. A graph is said to be {\it well-covered} if all of its maximal independent sets have the same size. In general, a graph $G$ with independence number $\alpha(G)$ is well-covered if and only if $\bar{G}$ is $K_{\alpha(G)+1}$-free and every clique, a set of vertices all pairwise adjacent, of cardinality less than $\alpha(G)$ is contained in a clique of order $\alpha(G)$.  
 An independent dominating set of $G$ is a vertex subset that is both dominating and independent in $G$, or equivalently, is a maximal independent set. The independent domination number of $G$, denoted by $\gamma_i(G)$, is the minimum size of all independent dominating sets of $G$.  The following relationship among the parameters under consideration is well-known \cite{domination}, 
 \[\gamma(G)\leq \gamma_i(G)\leq \alpha(G).\]

Let $d_i(G,k)$ denote the number of independent dominating sets of $G$ with cardinality $k$, i.e.,
\[d_i(G,k)=| \{D\subseteq V(G)~|~|D|=k,~ \langle D\rangle \textit{~is~an~empty~graph~ and } N[D]=V(G) \}|.\]  
The {\it independent domination polynomial}, $D_i(G,x)$ of $G$ is defined as
\[D_i(G,x)=\sum_{ k=\gamma_i(G)}^{\alpha(G)} d_i(G,k) x^{k}.\] 
Thus $D_i(G,x)$ is the generating polynomial for the number of independent dominating sets of  $G$ of each cardinality. 
A root of $D_i(G, x)$ is called an independence  domination root of $G$.

For many graph polynomials, their roots have attracted considerable attention, both for their own sake, as well for what the nature and location of the roots imply. The roots of the chromatic polynomial, independence polynomial, domination polynomial and total domination polynomials have been studied extensively \cite{aaop,euro,an,ber,bre,br,bt,cs,ob}. 
We investigate here independence  domination roots, that is, the roots of independent domination polynomials.
It is easy to see that the independent domination polynomial has no constant term. Consequently, $0$ is a root of every independent domination polynomial (in fact, $0$ is a root whose multiplicity is the independent domination number of the graph). Because the coefficients are positive integers, $(0,\infty)$ is a zero-free interval.  
Note that the independent domination polynomial of well-covered graph is a monomial which its root is only zero.

Let $a_0, a_1, \dots , a_n$ be a sequence of nonnegative numbers. It is unimodal if there is some
$m$, called a mode of the sequence, such that
\[
a_0 \leq a_1 \leq \dots \leq a_{m-1} \leq a_m \geq a_{m+1} \geq \dots \geq a_n.
\]
It is log-concave if $a_k^2\geq a_{k-1}a_{k+1}$ for all $1 \leq k \leq n - 1$. It is symmetric if $a_k = a_{n - k}$ for $0 \leq k \leq n$. A log-concave sequence of positive numbers is unimodal (see, e.g.,  \cite{Brenti}).
 We say that a polynomial $\sum^n_{k=0} a_k x^k$ is unimodal (log-concave, symmetric, respectively)
 if the sequence of its coefficients $a_0, a_1, \dots , a_n$ is unimodal (log-concave, symmetric, respectively). A basic approach to unimodality problems is to use Newton's inequalities: Let $a_0, a_1, \dots , a_n$ be a sequence of nonnegative numbers. Suppose
that the polynomial  $\sum\limits_{k=0}^n a_k x^k$ has only real zeros. Then

\[a_k^2\geq a_{k-1}a_{k+1}(1+\frac{1}{k})(1+\frac{1}{n-k}),~~ k = 1, 2, \cdots, n - 1,\]
and the sequence is therefore log-concave and unimodal \cite{hl}. 
Unimodality problems of graph polynomials have always been of great interest to researchers in graph theory. For example, it is conjectured that the chromatic polynomial  and domintion polynomial of a graph are unimodal \cite{saeid1,read}. Recently, the authors in \cite{uni}  have shown that if $H=K_r$,  then the polynomial $D(G \circ H, x)$ is unimodal for every graph $G$, where $G\circ H$ is the corona product of two graphs $G$ and $H$ defined by Frucht and Harary \cite{Fruc}. There has been an extensive literature in recent years on the unimodality problems of independence polynomials (see \cite{toc,br,yz,bx} for instance).

\medskip

In the next section, we study the location of independence domination root of graphs. In section  3, we investigate the independence polynomials of some generalized compound graphs, and  we  construct graphs whose  independence polynomials are 
unimodal or log-concave, or having only real zeros. In section 4, we consider specific graphs and study their independent dominating sets with cardinality $i$, for $\gamma_i(G)\leq i \leq \alpha(G)$. The independent domination polynomial of some standard graphs are
obtained, some properties of the independent domination polynomial of a graph are established. 


  \section{Location of independence domination roots }
 In this section, we consider the location of roots of independent domination polynomials. Of course, such roots must necessarily be negative as independent domination polynomials have positive coefficients.  

 The integer roots of graph polynomials  have been extensively studied in the literature, see e.g. \cite{aaop,dong}. There  is  a conjecture in \cite{euro} 
  which states that every integer root of the domination polynomial $D(G,x)$ is $-2$ or $0$. 

 Let $G$ be a graph of order $n$. Note that $D_i(G,1)$ equals the number of independent dominating sets of $G$. Also $D_i(G,-1)$ is the difference of the numbers of independent dominating sets of even size and odd size of $G$.   
  The join $ G_1+ G_2$ of two graphs $G_1$ and $G_2$ with disjoint vertex sets $V_1$ and $V_2$ and edge sets $E_1$ and $E_2$ is the graph union $G_1\cup G_2$ together with all the edges joining $V_1$ and $V_2$.
 It is not hard to see that the formula for independent domination polynomial of join of two graphs is obtained as follows. The following result was also proven in \cite{Dod} as Theorem 2.3:

\begin{theorem}{\rm \cite{Dod}}
If  $G_1$ and $G_2$ are  nonempty graphs, then
\begin{eqnarray*}
D_i(G_1 + G_2,x) = D_i(G_1, x) + D_i(G_2, x).
\end{eqnarray*}
\end{theorem}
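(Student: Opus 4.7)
The plan is to show that the independent dominating sets of $G_1+G_2$ are precisely the disjoint union of the independent dominating sets of $G_1$ and those of $G_2$, viewed as subsets of $V_1\cup V_2$. Once this bijection on the family of independent dominating sets is established and seen to preserve cardinality, the polynomial identity follows by summing $x^{|A|}$ on both sides.

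The first step I would carry out is the structural observation that every independent set $A$ of $G_1+G_2$ lies entirely in $V_1$ or entirely in $V_2$. Indeed, by definition of the join, every vertex of $V_1$ is adjacent to every vertex of $V_2$, so as soon as $A$ contains one vertex from each side, $A$ fails to be independent. Next, suppose without loss of generality that $A\subseteq V_1$. Then the edge set of $\langle A\rangle$ inside $G_1+G_2$ coincides with the edge set of $\langle A\rangle$ inside $G_1$, so $A$ is independent in $G_1+G_2$ if and only if $A$ is independent in $G_1$.

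The second step is to characterize when such an $A\subseteq V_1$ is dominating and maximal. Because $G_2$ is nonempty and every $v\in V_2$ is adjacent to every vertex of $V_1$, the set $A$ automatically dominates all of $V_2$ as soon as $A\neq\emptyset$; thus $A$ dominates $V(G_1+G_2)$ if and only if $A$ dominates $V_1$ in $G_1$. For maximality, note that no vertex of $V_2$ can be added to $A$ (each is adjacent to every vertex of $A$), so extending $A$ in $G_1+G_2$ means extending it inside $G_1$. Consequently $A$ is an independent dominating set of $G_1+G_2$ if and only if $A$ is an independent dominating set of $G_1$; the hypothesis that $G_1$ and $G_2$ are nonempty is what makes this equivalence clean, because it guarantees that every independent dominating set is nonempty and that the cross-edges exist. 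An identical argument applies to subsets of $V_2$.

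Combining these two steps, the collection of independent dominating sets of $G_1+G_2$ decomposes as the disjoint union of the independent dominating sets of $G_1$ and those of $G_2$, and the decomposition preserves cardinality. Grouping by size yields $d_i(G_1+G_2,k)=d_i(G_1,k)+d_i(G_2,k)$ for every $k$, which immediately gives the claimed identity $D_i(G_1+G_2,x)=D_i(G_1,x)+D_i(G_2,x)$. The only mildly delicate point — and the one I would write carefully — is the handling of the maximality condition together with the nonemptiness hypothesis, to make sure no stray independent set on one side is accidentally extendable by a vertex from the other side.
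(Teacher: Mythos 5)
Your proof is correct and follows essentially the same approach as the paper, which simply observes that independent dominating sets of $G_1$ or $G_2$ are exactly the independent dominating sets of $G_1+G_2$. You are in fact more thorough than the paper's one-line argument: you verify the converse direction (that every independent dominating set of the join lies entirely in one side and remains dominating and maximal there), which the paper leaves implicit.
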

\begin{proof}
 By definition, for every  independent dominating set of $G_1~(G_2), D\subseteq V(G_1)$\\$~(D\subseteq V(G_2)), D$ is an independent 
 dominating set for $G_1+G_2$. So we have result.\qed
\end{proof}  
  
  \begin{theorem}
  	\begin{enumerate}
  		\item [(i)]   For any integer number $n$, there is a connected graph $G$  such that $D_i(G,-1)=n$.
  		
  		\item [(ii)]  For any negative integer number $n$, there is a connected graph $G$  for which $n$ is an independence  domination roots of $G$.
  	\end{enumerate}
 \end{theorem}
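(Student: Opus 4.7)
The plan is to exploit the join formula $D_i(G_1+G_2,x)=D_i(G_1,x)+D_i(G_2,x)$ from the previous theorem, together with the evaluation of $D_i$ on a small family of building blocks. Since the join of two nonempty graphs is automatically connected, every construction below will be connected for free.

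For part (i), I first handle the negative case. Every singleton of $K_m$ is a maximal independent set, so $D_i(K_m,x)=mx$ and $D_i(K_m,-1)=-m$; this realizes any negative integer. For $n=0$ the path $P_3$ works, since its only maximal independent sets are $\{v_2\}$ and $\{v_1,v_3\}$, giving $D_i(P_3,x)=x+x^2$ and $D_i(P_3,-1)=0$. For positive $n$ the key step is to identify a connected graph with value $+1$ at $x=-1$. I claim $C_6$ is such a graph: its maximal independent sets are the three antipodal pairs $\{v_i,v_{i+3}\}$ of size two together with the two alternating triples of size three, so $D_i(C_6,x)=3x^2+2x^3$ and $D_i(C_6,-1)=1$. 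Iterating the join formula, the join of $n$ copies of $C_6$ yields $D_i=n$ at $x=-1$, completing (i).

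For part (ii), given a negative integer $n$, consider $G_n=K_{|n|}+\overline{K_2}$. The graph $\overline{K_2}$ has a unique independent dominating set, namely its whole vertex set (the only single vertex does not dominate the other), so $D_i(\overline{K_2},x)=x^2$. The join formula then gives
\[
D_i(G_n,x)=|n|\,x+x^2=x\bigl(x+|n|\bigr),
\]
whose roots are $0$ and $-|n|=n$. Being a join, $G_n$ is connected, so $n$ is an independence domination root of the connected graph $G_n$.

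The only step that requires a little thought is identifying a connected graph whose $D_i$ is positive at $x=-1$, needed for positive $n$ in (i); once $C_6$ is in hand, both parts reduce to routine applications of the join formula. Everything else is direct verification of the maximal independent sets of the few small graphs involved.
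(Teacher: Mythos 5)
Your proof is correct and follows essentially the same route as the paper: both parts rest on the join formula $D_i(G_1+G_2,x)=D_i(G_1,x)+D_i(G_2,x)$ applied to explicit building blocks (the paper uses $P_8$ with $D_i(P_8,-1)=1$ and the complete multipartite graph $K_{m,m-1,\dots,m-1}$, which for $m=2$ is exactly your $K_{|n|}+\overline{K_2}$). If anything, your version is slightly more complete, since you exhibit a concrete witness for the value $-1$, verify your building blocks ($C_6$, $P_3$, $K_m$, $\overline{K_2}$) directly, and treat the case $n=0$, which the paper's argument leaves implicit.
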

\begin{proof}
	\begin{enumerate}
		\item [(i)] 	
 Let $G_i$ be a non-empty graph with $D_i(G_i,-1)=1$ (say $P_8$) for every $i\in \{1,2,\cdots,n\}$ and $H=G_1+G_2+\cdots +G_n$. Then 
  \[D_i(H,x)=\sum\limits_{i=1}^n D_i(G_i,x),\]
  and so the result is true for positive integer number $n$. Now for negative integer $n$, consider  a non-empty graph $G_i$ with $D_i(G_i,-1)=-1$.
  
  \item[(ii)]  
   For  natural number $n$, consider complete $(n+1)$-partite graph \\ $G_m=K_{m,m-1,\cdots, m-1}$. 
   Thus
   \[D_i(G_m,x)=x^m+\sum\limits_{i=1}^n x^{m-1}=x^m+nx^{m-1}.\]
   So the result is true for negative integer number $n$.\qed 
  \end{enumerate} 
  \end{proof}

To know more about the location of roots of independence domination polynomials  of  graphs, we recall the definition of lexicographic product of two graphs.  
For two graphs $G$ and $H$, let $G[H]$ be the graph with vertex
set $V(G)\times V(H)$ and such that vertex $(a,x)$ is adjacent to vertex $(b,y)$ if and only if
$a$ is adjacent to $b$ (in $G$) or $a=b$ and $x$ is adjacent to $y$ (in $H$). The graph $G[H]$ is the
lexicographic product (or composition) of $G$ and $H$, and can be thought of as the graph arising from $G$ and $H$ by substituting a copy of $H$ for every vertex of $G$.
The following theorem which is similar to independence polynomial of $G[H]$ (see \cite{br}) gives the independent domination polynomial of $G[H]$.

\begin{theorem}
	If  $G$ and $H$ are two  graphs, then the independent domination polynomial of $G[H]$ is 
	\[ 
	D_i(G[H],x)=D_i(G,D_i(H,x)).
	\]
\end{theorem}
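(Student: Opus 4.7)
The plan is to give a bijective characterization of the independent dominating sets of $G[H]$ in terms of those of $G$ and $H$, and then read off the polynomial identity by grouping the generating-function sum accordingly.

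View $V(G[H])$ as the disjoint union of copies $V(H_a)$ of $V(H)$, one for each $a\in V(G)$. For $D\subseteq V(G[H])$, set
\begin{equation*}
A(D)=\{a\in V(G):D\cap V(H_a)\neq\emptyset\}\quad\text{and}\quad D_a=D\cap V(H_a).
\end{equation*}
The key claim is that $D$ is an independent dominating set of $G[H]$ if and only if $A(D)$ is an independent dominating set of $G$ and, for every $a\in A(D)$, the set $D_a$ is an independent dominating set of $H_a\cong H$. I would use the standard fact (recalled in the introduction) that an independent dominating set is the same as a maximal independent set, since the maximality formulation makes the case analysis cleaner.

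For independence, two vertices of $D$ lying in distinct copies $H_a,H_b$ force $a\not\sim b$ in $G$, so $A(D)$ is independent in $G$; within each copy, adjacency in $G[H]$ agrees with adjacency in $H$, so every $D_a$ is independent in $H$. For maximality, take $(c,z)\notin D$. If $c\notin A(D)$, the failure of $D\cup\{(c,z)\}$ to be independent can only be witnessed by some $(b,y)\in D$ with $b\sim c$, which says that $A(D)\cup\{c\}$ fails to be independent in $G$. If $c\in A(D)$ with $z\notin D_c$, any $(b,y)\in D$ with $b\sim c$ would contradict independence of $A(D)$, so the failure must come from some $(c,y)\in D_c$ with $y\sim z$, i.e., $D_c\cup\{z\}$ fails to be independent in $H$. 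Together these say exactly that $A(D)$ is a maximal independent set of $G$ and each $D_c$ is a maximal independent set of $H$, and the argument reverses.

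Given the characterization, $|D|=\sum_{a\in A(D)}|D_a|$, so grouping by $A$ first and then summing independently over the choices of each $D_a$ yields
\begin{equation*}
D_i(G[H],x)=\sum_{A}\prod_{a\in A}D_i(H,x)=\sum_{k}d_i(G,k)\,D_i(H,x)^{k}=D_i(G,D_i(H,x)),
\end{equation*}
where $A$ ranges over independent dominating sets of $G$. The only delicate point is the two-case maximality argument: specifically, the observation that cross-copy neighbors of $(c,z)$ live in copies indexed by $A(D)$ and therefore cannot supply the missing neighbor when $c\in A(D)$, which is what forces $D_c$ to be maximal in $H$ rather than merely independent.
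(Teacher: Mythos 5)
Your proof is correct and takes essentially the same route as the paper: both decompose an independent dominating set of $G[H]$ as an independent dominating set $A$ of $G$ together with an independent dominating set of $H$ in each copy indexed by $A$, and read off the polynomial composition. The paper merely asserts this correspondence, whereas you supply the verification (in particular the two-case maximality argument showing each $D_c$ must be maximal in its copy), so your write-up is a more complete version of the same argument.
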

\begin{proof}
	By definition, the polynomial $D_i(G,D_i(H,x))$ is given by
	\begin{equation}\label{eq1}
	\sum\limits_{k=0}^{\alpha(G)}d_i(G,k)\Big(\sum\limits_{j=0}^{\alpha(H)}d_i(H,j)x^j\Big)^k.
	\end{equation}
	An independent dominating set in $G[H]$ of cardinality $l$ arises by choosing an independent dominating
	set in $G$ of cardinality $k$, for some $k\in \{0,1,~\ldots, l\},$ and then, within each copy of $H$ in $G[H]$, 
	choosing an independent dominating set in $H$, in such a way that the total number of vertices chosen is $l$. But the number of ways of actually doing this is exactly the coefficient of $x^l$ in (\ref{eq1}),  which completes the proof.\qed
\end{proof}

We define an expansion of a graph $G$ to be a graph formed from $G$ by replacing each vertex by a complete graph; that is, for each vertex $u$ of $G$, we replace $u$ by a new complete graph $K_u$, and add in edges between all vertices in $K_u$ and $K_v$ whenever $uv$
is an edge of $G$. The expansion operation can push the roots of the  independent domination polynomial into the unit disc.

\begin{theorem}
	Every graph $G$ is an induced subgraph of a graph $H$ whose independence  domination roots lie in $|z|\leq 1$.
\end{theorem}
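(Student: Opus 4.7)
The plan is to take $H$ to be the expansion of $G$ obtained by replacing each vertex by a copy of the complete graph $K_n$ for a sufficiently large $n$; this is precisely the lexicographic product $H = G[K_n]$. Observing that $G$ sits inside $H$ as an induced subgraph via any transversal that picks one vertex from each replacement copy (which follows immediately from the adjacency rule of the lexicographic product), the only remaining task is to show that for $n$ large enough the independence domination roots of $H$ lie in the closed unit disc.

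The central identity I would exploit is the lexicographic product formula just established, namely
$$D_i(H, x) \;=\; D_i(G, D_i(K_n, x)).$$
The key observation is that $D_i(K_n, x) = n x$, since the only maximal independent sets of a complete graph are its $n$ singletons. Substituting this into the product formula yields $D_i(H, x) = D_i(G, n x)$, so the roots of $D_i(H, x)$ are precisely the roots of $D_i(G, x)$ scaled by the factor $1/n$.

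Let $\rho_1, \ldots, \rho_m$ denote the independence domination roots of $G$. Choosing any integer $n$ with $n \geq \max\{1, |\rho_1|, \ldots, |\rho_m|\}$ then forces every root of $D_i(H, x)$ into $|z| \leq 1$, and the proof is complete.

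I do not anticipate a serious obstacle: the whole argument hinges on the single algebraic fact that $D_i(K_n, x)$ is linear in $x$, which collapses the composition in the product formula into a simple rescaling. The minor bookkeeping to check is that $G$ really is an induced subgraph of $G[K_n]$, and that the substitution $x \mapsto n x$ produces no spurious roots (which is automatic, since $x \mapsto n x$ is a bijection of $\mathbb{C}$).
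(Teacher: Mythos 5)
Your construction is the same as the paper's (replace every vertex of $G$ by a large complete graph, i.e.\ form $G[K_n]$, and use the composition formula to get $D_i(H,x)=D_i(G,nx)$), but your concluding step is genuinely different and, in fact, cleaner. The paper finishes by noting that for $r$ large the coefficients $r^k d_i(G,k)$ become increasing and then invokes the Enestr\"{o}m--Kakeya theorem to place all roots in $|z|\leq 1$. You instead observe directly that the substitution $x\mapsto nx$ scales every root of $D_i(G,x)$ by $1/n$, so taking $n$ at least the maximum modulus of the finitely many roots of $D_i(G,x)$ forces them all into the closed unit disc. Your route avoids Enestr\"{o}m--Kakeya entirely and sidesteps a subtlety in the paper's argument: the coefficient sequence $d_i(G,\gamma_i(G)),\dots,d_i(G,\alpha(G))$ can have internal zeros (graphs may lack maximal independent sets of some intermediate size), in which case no choice of $r$ makes the coefficients increasing and the Enestr\"{o}m--Kakeya hypothesis cannot be met without further argument; your scaling argument is immune to this. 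Both proofs correctly rely on $D_i(K_n,x)=nx$ and on $G$ embedding as an induced subgraph of $G[K_n]$ via a transversal, which you check explicitly.
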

\begin{proof}
	We replace  every vertex of $G$ by the same suitably large complete graph $K_r$. So we have  
	\[D_i(H, x)=D_i(G[K_r], x)=D_ i(G, rx) =\sum\limits_{k\geq 0}r^kd_i(G,k)x^k.\]
	If $r$ is large enough, then the coefficients of $D_i(H,k)$ can be made to be increasing, and by the Enestr\"{o}m-Kakeya Theorem (c.f. \cite{ander}), 
	all the roots will lie in $|z|\leq 1$.\qed
\end{proof}


\section{Independent domination polynomials of compound graphs}

Song et al. in \cite{song} defined an operation of graphs called the compound graph.   
Given two graphs $G$ and $H$, assume that $\mathcal{C} = \{C_1,C_2, \cdots ,C_k\}$ is a clique cover of $G$.
Construct a new graph from $G$, as follows: for each clique $C_i \in \mathcal{C}$, add a
copy of the graph $H$ and join every vertex of $C_i$ to every vertex of $H$. Let $G^{\Delta}(H)$ denote
the new graph. In fact, the compound graph is a generalization of
the corona of $G$ and $H$, if each clique $C_i$ of the clique cover $\mathcal{C}$ is a vertex.

In this section, we consider compound graphs and  formulate the independent domination polynomial for some generalized compound graphs.  To do this we need some preliminaries.

The independence polynomial was introduced in \cite{gut} as a generalization of the matching polynomial:
\[I(G,x)=\sum_{k\geq 0}i(G,k)x^k,\]
where $i(G,k)$ is the number of independent subsets of $V(G)$ with cardinality $k$. 
We recall that $G$ is called to be claw-free if no induced subgraph of it is a claw. Chudnovsky and Seymour  \cite{cs} showed that the independence polynomial of a claw-free graph has only real zeros. Chudnovsky and  Seymour actually proved the next result.
\begin{theorem}\label{thid}{\rm \cite{cs}}
If  $G$ is  a claw-free graph, then $I(G, x)$ has only real zeros.
\end{theorem}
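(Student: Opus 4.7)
The plan is to proceed by induction on $|V(G)|$, using the standard pivot recurrence at any vertex $v$,
\[
I(G,x) = I(G-v,x) + x\, I(G-N[v],x).
\]
Since $G-v$ and $G-N[v]$ are induced subgraphs of a claw-free graph, they are themselves claw-free, so the inductive hypothesis gives that $I(G-v,x)$ and $I(G-N[v],x)$ have only real zeros. Real-rootedness of each summand is not enough to conclude real-rootedness of their combination; what is needed is that the two polynomials \emph{interlace} on the negative real axis. Consequently the correct inductive statement must be strengthened: for every claw-free $G$ and every $v\in V(G)$, the pair $\bigl(I(G-v,x),\,I(G-N[v],x)\bigr)$ consists of real-rooted polynomials whose zeros interlace. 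A standard Hermite--Biehler-type lemma then guarantees that $I(G,x)=f(x)+x\,g(x)$ is real-rooted whenever the pair $(f,g)$ interlaces, and in fact interlaces with $f$, so the strengthened property can be propagated.

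To carry this through, I would maintain a whole system of interlacing relations among polynomials $I(G-S,x)$ and $I(G-N[S],x)$ for small vertex sets $S$, exploiting the fact that the cone of polynomials interlacing a fixed real-rooted polynomial is closed under nonnegative linear combinations. A cleaner route, closer to the Chudnovsky--Seymour strategy, is to replace the univariate polynomial by the multivariate independence polynomial
\[
I(G;x_1,\dots,x_n)\;=\;\sum_{\substack{S\subseteq V(G)\\ S \text{ independent}}}\;\prod_{u\in S} x_u,
\]
and prove its real stability, i.e.\ that it does not vanish when every $x_u$ has positive imaginary part. Since real stability is preserved under the diagonal specialization $x_u\mapsto x$, the univariate theorem would follow. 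Real stability has the additional advantage of being closed under a rich list of operations (differentiation, setting variables to real values, etc.), which is exactly what is needed to run an induction on vertices.

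The main obstacle is the interlacing/stability step itself, and it is precisely here that the claw-free hypothesis must enter in an essential way: without it, $I(G,x)$ can have non-real zeros, so no formal argument can succeed unless it exploits the absence of induced $K_{1,3}$. In the Heilmann--Lieb special case, when $G=L(H)$ is a line graph and $I(G,x)$ is the matching polynomial of $H$, the interlacing is elementary via an involution on symmetric differences of matchings; but for general claw-free graphs no such combinatorial involution is available. One must therefore import either the deep structural decomposition of claw-free graphs (reducing to a short list of building blocks and verifying that the composition operations preserve real-rootedness), or a direct multivariate stability argument in which the claw-free hypothesis is used to rule out, by a careful case analysis on neighbourhood structure, any configuration that would force a zero in the upper half-plane. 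Either way, the interlacing algebra and real-analysis steps are routine; translating the purely combinatorial input ``$G$ has no induced $K_{1,3}$'' into a statement with analytic content is the genuinely hard part.
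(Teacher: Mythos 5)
There is a genuine gap here, and in fact two separate things to point out. First, the paper itself offers no proof of this statement: it is quoted verbatim from Chudnovsky and Seymour \cite{cs} as an external black box, so there is no ``paper's own proof'' to match against. Second, and more importantly, your proposal is not a proof either --- it is an honest research plan that correctly identifies the standard machinery (the pivot recurrence $I(G,x)=I(G-v,x)+xI(G-N[v],x)$, the need to strengthen the induction to an interlacing statement, and the alternative route via multivariate stability), but then explicitly stops at the one step that carries all the content. You write that ``translating the purely combinatorial input `$G$ has no induced $K_{1,3}$' into a statement with analytic content is the genuinely hard part'' --- and you never do it. As it stands, claw-freeness is never actually used anywhere in your argument, so nothing distinguishes your outline from one that would ``prove'' the (false) claim for arbitrary graphs.

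The missing ingredient is concrete and worth naming. In a claw-free graph, every vertex has at most two neighbours in any independent set (three pairwise nonadjacent neighbours would form a claw). Consequently the union of two independent sets induces a subgraph of maximum degree at most two, i.e.\ a disjoint union of paths and cycles. This is exactly the generalization of the symmetric-difference-of-matchings structure underlying Heilmann--Lieb, and it is the combinatorial engine that Chudnovsky and Seymour feed into their interlacing induction (their argument is a delicate strengthening of the inductive hypothesis to pairs of graphs, not a reduction to the structural classification of claw-free graphs, which your sketch offers as one possible route). Without this lemma, or some substitute for it, the interlacing step cannot be established and the induction does not close. To turn your proposal into a proof you would need to state and prove this degree bound, and then carry out the interlacing bookkeeping it enables --- which is essentially reproving the cited theorem of \cite{cs}.
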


The following theorem gives the independent domination polynomial of $G^{\Delta}(H)$.  
  \begin{theorem}\label{cd}
 For two graphs $G$ and $H$, let $\mathcal{C} = \{C_1,C_2, \cdots, C_q\}$ be  a clique cover of $G$. Then 
\[ 
	D_i(G^{\Delta}(H),x)=D_i^{q}(H,x) I(G,\frac{x}{D_i(H,x)}).
\]
\end{theorem}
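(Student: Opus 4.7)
The plan is to set up a bijective correspondence between independent dominating sets of $G^{\Delta}(H)$ and structured pairs, then collect the generating function. Write $H_1,\ldots,H_q$ for the attached copies of $H$, one per clique $C_i$. Given an independent dominating set $D$ of $G^{\Delta}(H)$, let $S = D \cap V(G)$, and observe that $D$ is recovered from $S$ together with the pieces $D \cap V(H_i)$ for $i = 1,\ldots,q$. I will reorganise $\sum_D x^{|D|}$ as an outer sum over the possible $S$ and an inner product over admissible choices inside each $V(H_i)$, and recognise the outer sum as $I(G,\cdot)$ evaluated at the right point.

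The structural step, which tacitly uses that $\mathcal{C}$ is read as a clique \emph{partition} (as the formula requires), is a per-clique dichotomy. Since $C_i$ is a clique, $|S \cap C_i| \le 1$. If $S \cap C_i = \{v\}$, then $v$ is joined to every vertex of $V(H_i)$, so independence of $D$ forces $D \cap V(H_i) = \emptyset$, and the same $v$ already dominates both $C_i$ and $V(H_i)$. If instead $S \cap C_i = \emptyset$, then the only edges from $V(H_i)$ to $V(G)$ land in $C_i \subseteq V(G)\setminus D$, so for every vertex of $V(H_i) \setminus D$ the dominating neighbour must lie in $V(H_i)$ itself; together with independence, this forces $D \cap V(H_i)$ to be an independent dominating set of $H_i \cong H$. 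Conversely, any pair consisting of an independent $S \subseteq V(G)$ together with, for each $i$ with $S \cap C_i = \emptyset$, a choice of IDS of $H_i$, assembles into a valid IDS of $G^{\Delta}(H)$, because each such IDS is nonempty and every one of its vertices is adjacent to all of $C_i$, which therefore is also dominated.

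With the bijection established, set $B(S) = \{i : S \cap C_i = \emptyset\}$; since $\mathcal{C}$ is a partition, $|B(S)| = q - |S|$. Therefore
\[
D_i(G^{\Delta}(H), x) \;=\; \sum_{S\text{ indep in }G} x^{|S|} \prod_{i \in B(S)} D_i(H, x) \;=\; D_i(H, x)^{q} \sum_{S} \left(\frac{x}{D_i(H, x)}\right)^{|S|},
\]
and recognising the last sum as $I\bigl(G,\, x/D_i(H, x)\bigr)$ yields the claimed formula. The main obstacle is the structural dichotomy above — especially the converse direction, where one must verify that patching an independent set of $G$ together with an IDS of $H$ on each clique it misses really produces a \emph{maximal} independent set of $G^{\Delta}(H)$. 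This hinges on the nonemptiness of every IDS of a nonempty $H$, so that each vertex of $C_i$ with $i \in B(S)$ is dominated from within $H_i$ even when $S$ itself avoids $C_i$.
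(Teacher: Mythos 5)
Your proposal is correct and follows essentially the same route as the paper's proof: both decompose an independent dominating set of $G^{\Delta}(H)$ into an independent set $S$ of $G$ together with an independent dominating set of $H$ in each copy attached to a clique missed by $S$, and then collect the generating function as $\sum_S x^{|S|} D_i(H,x)^{q-|S|}$. Your write-up is in fact more careful than the paper's, since you explicitly verify both directions of the correspondence (including why $D\cap V(H_i)$ must be empty or a maximal independent set of $H_i$, and why the reassembled set dominates), whereas the paper asserts the two-stage count without this justification.
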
 
\proof 
Every independent vertex subset of $G$ can be expanded to an independent dominating set in $G^{\Delta}(H)$.  For each $k$, select $k$ independent elements from $V(G^\mathcal{C} \star H)$ in a two-stage process. First, let us choose $m$ independent elements from $V(G)$ and partition $\mathcal{C}$ into two groups: the cliques containing one of the chosen elements, and those do not. And then select the remaining $(k-m)$ independent dominating elements from $V((q - m)H)$. In consequence, we obtain that $d_i(G^\mathcal{C} \star H,k)$ equals to
\[\sum\limits_{m=0}^ki_m\sum\limits_{j_1+\cdots+j_{q-m}=k-m}d_i(H,j_1)\cdots d_i(H,j_{q-m}).\]
Thus we have
\begin{eqnarray*}
D_i(G^{\Delta}(H),x)&=& \sum\limits_{k\geq 0}i_mx^mD_i^{q-m}(H,x)\\
&=&D_i^{q}(H,x) \sum\limits_{m\geq 0}i_mx^mD_i^{-m}(H,x)\\
&=& D_i^{q}(H,x) I(G,\frac{x}{D_i(H,x)}).
\end{eqnarray*}
 Therefore the Theorem follows. \qed

It is not hard to see that $q -\alpha(G)$ is always nonnegative. So in view of Theorem \ref{cd}, the following is immediate.
\begin{corollary}
Given two graphs $G$ and $H$, assume that $\mathcal{C}$ is a clique cover of $G$. If  $|\mathcal{C}| = q$, then $ [D_i(H,x)]^{q-\alpha(G)}$ divides $D_i(G^{\Delta}(H),x)$.
\end{corollary}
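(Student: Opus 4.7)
The plan is to deduce the divisibility directly from Theorem~\ref{cd} by expanding the independence polynomial $I(G,y)$ in its defining form and tracking the exponent of $D_i(H,x)$ in each term. Concretely, writing $I(G,y)=\sum_{m=0}^{\alpha(G)} i(G,m)\,y^m$ and substituting $y=x/D_i(H,x)$, Theorem~\ref{cd} gives
\[
D_i(G^{\Delta}(H),x) \;=\; D_i(H,x)^{q}\sum_{m=0}^{\alpha(G)} i(G,m)\, x^{m}\,D_i(H,x)^{-m}
\;=\; \sum_{m=0}^{\alpha(G)} i(G,m)\, x^{m}\, D_i(H,x)^{q-m}.
\]
The sum has only finitely many terms because $I(G,y)$ has degree $\alpha(G)$, so the exponent $q-m$ of $D_i(H,x)$ appearing in each summand lies between $q-\alpha(G)$ and $q$.

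Next I would verify that the intended exponent $q-\alpha(G)$ is nonnegative, as the authors already note. This is the standard observation that in any clique cover $\mathcal{C}=\{C_1,\dots,C_q\}$ of $G$, an independent set can contain at most one vertex from each clique, whence $\alpha(G)\le q$. So $q-m \ge q-\alpha(G)\ge 0$ for every $m$ in the range of summation.

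Factoring $D_i(H,x)^{q-\alpha(G)}$ out of each summand then yields
\[
D_i(G^{\Delta}(H),x) \;=\; D_i(H,x)^{\,q-\alpha(G)}\sum_{m=0}^{\alpha(G)} i(G,m)\, x^{m}\,D_i(H,x)^{\alpha(G)-m},
\]
where the remaining sum is a genuine polynomial in $x$ (all the exponents $\alpha(G)-m$ are nonnegative). This exhibits $[D_i(H,x)]^{q-\alpha(G)}$ as an explicit polynomial divisor of $D_i(G^{\Delta}(H),x)$, which is exactly the claim.

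There is no real obstacle here; the corollary is a bookkeeping consequence of Theorem~\ref{cd} once one remembers that $\deg I(G,y)=\alpha(G)$ and that a clique cover forces $\alpha(G)\le q$. The only thing to be careful about is to present the factorization after making the substitution, rather than working in the field of rational functions, so that the statement ``divides'' is understood in the polynomial ring $\mathbb{Z}[x]$.
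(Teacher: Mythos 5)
Your proof is correct and is exactly the argument the paper intends: the paper derives the corollary as "immediate" from Theorem~\ref{cd} together with the observation that $q-\alpha(G)\geq 0$, and your expansion of $D_i(H,x)^{q}\,I\bigl(G,\tfrac{x}{D_i(H,x)}\bigr)$ into $\sum_{m}i(G,m)x^{m}D_i(H,x)^{q-m}$ followed by factoring out $D_i(H,x)^{q-\alpha(G)}$ just makes that explicit. No issues.
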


In fact, Theorem \ref{cd} is generalization of the following corollary. 

\begin{corollary}\!{\rm\cite{Dod}}\!
For any graph $G$ of order $n$, $D_i(G \circ H, x) = [D_i(H, x)]^nI\big(G,\frac{x}{D_i(H, x)}\big)$\!.
\end{corollary}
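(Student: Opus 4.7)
The plan is to obtain this corollary as an immediate specialization of Theorem \ref{cd}, by identifying the corona $G\circ H$ with a particular compound graph.

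First I would recall the definition of the corona $G\circ H$ of Frucht and Harary: it is the graph obtained from one copy of $G$ and $n=|V(G)|$ disjoint copies of $H$ by joining the $i$-th vertex of $G$ to every vertex of the $i$-th copy of $H$. The key observation is that this is precisely the compound graph $G^{\Delta}(H)$ associated with the trivial clique cover $\mathcal{C}=\{\{v_1\},\{v_2\},\ldots,\{v_n\}\}$ of $G$, where each clique $C_i$ is a single vertex. Indeed, in the construction of $G^{\Delta}(H)$, each clique receives its own copy of $H$ completely joined to that clique; when every clique has cardinality one, we recover exactly the corona construction.

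Next I would apply Theorem \ref{cd} with this cover, for which $q=|\mathcal{C}|=n$. Substituting directly gives
\[
D_i(G\circ H,x)=D_i(G^{\Delta}(H),x)=D_i^{n}(H,x)\,I\!\left(G,\frac{x}{D_i(H,x)}\right),
\]
which is the desired formula. There is no real obstacle here beyond verifying the identification $G\circ H=G^{\Delta}(H)$ for the singleton clique cover; once that is in hand, the result is a one-line substitution and no further combinatorial argument is required. One could, if desired, briefly remark that since the $\{v_i\}$ are always a valid clique cover of any graph $G$, the corona is indeed always a legitimate instance of the compound construction, so the hypotheses of Theorem \ref{cd} are met automatically.
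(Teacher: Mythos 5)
Your proposal is correct and matches the paper's intended derivation exactly: the paper notes that the compound graph specializes to the corona when every clique in the cover is a singleton, so the corollary follows from Theorem \ref{cd} with $q=n$. No further comment is needed.
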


The Theorem \ref{cd} is useful to build many different graphs with the same independent domination polynomial. If $q$ and $H$ are 
 fixed, then different partitions of $V(G)$ into $q$ cliques gives different graphs with the same independent domination polynomial.

Now, we present various results that   the compound of some special graphs preserves symmetry, unimodality, log-concavity or reality of zeros of independent domination polynomials.  We need the following results:

\begin{lemma}{\rm \cite{Stan}}\label{l31}
	Let $f(x)$ and $g(x)$ be polynomials with positive coefficients.

	{\rm $(i)$} If both $f(x)$ and $g(x)$ are log-concave, then so is their product $f(x)g(x)$.

	{\rm $(ii)$} If $f(x)$ is log-concave, and $g(x)$ is unimodal, then  their product $f(x)g(x)$ is unimodal.

{\rm $(iii)$} If both $f(x)$ and $g(x)$ are symmetric and unimodal, then so is their product $f(x)g(x)$.
\end{lemma}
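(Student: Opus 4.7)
The plan is to fix the notation $f(x) = \sum_{i=0}^m a_i x^i$, $g(x) = \sum_{j=0}^n b_j x^j$, and $h(x) := f(x)g(x) = \sum_{k=0}^{m+n} c_k x^k$ with $c_k = \sum_{i+j=k} a_i b_j$ (extending $a_i, b_j$ by zero outside their natural ranges), and then handle the three parts by increasingly structural arguments. All coefficients in sight are positive, so the division by any $c_k$ that appears below is legitimate.

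For (i), I would prove $c_k^2 \geq c_{k-1} c_{k+1}$ by expanding both sides as double sums indexed by pairs $(i,i')$ with $i+j = k = i'+j'$ on the left and the corresponding shifted indices on the right, and then collecting terms symmetrically under the swap $i \leftrightarrow i'$. After re-indexing, each symmetric pair contributes a product of two factors of the form $a_{i}a_{i'} - a_{i-1}a_{i'+1}$ and $b_{j}b_{j'} - b_{j-1}b_{j'+1}$, both of which are nonnegative by log-concavity of $\{a_i\}$ and $\{b_j\}$ respectively. This is the classical fact that the Cauchy convolution of two log-concave nonnegative sequences is log-concave.

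For (iii), symmetry of $h$ is immediate: the substitution $i \mapsto m-i$, $j \mapsto n-j$ in the Cauchy sum yields $c_k = c_{m+n-k}$. For unimodality, I would exploit the structural fact that every symmetric unimodal polynomial of degree $d$ with nonnegative coefficients is a nonnegative linear combination of the palindromic atoms $P_{i,d}(x) = x^i + x^{i+1} + \cdots + x^{d-i}$ for $0 \leq i \leq \lfloor d/2 \rfloor$. A direct computation shows that each product $P_{i,m}(x)\cdot P_{j,n}(x)$ is a shifted trapezoidal polynomial of degree $m+n$, manifestly symmetric and unimodal about its center; hence $fg$, being a nonnegative combination of such symmetric unimodal polynomials of the common degree $m+n$, is itself symmetric and unimodal. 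For (ii), letting $\mu$ denote a mode of $g$, I would analyze $c_{k+1} - c_k = \sum_i a_i (b_{k+1-i} - b_{k-i})$ by splitting according to the sign of $b_{k+1-i} - b_{k-i}$: this difference is nonnegative for $i > k-\mu$ and nonpositive for $i \leq k-\mu$. Log-concavity of $(a_i)$ forces the monotonicity of the ratios $a_{i+1}/a_i$, which in turn propagates to a monotonicity of the weighted balance between the positive and negative parts of the splitting, so $c_{k+1} - c_k$ changes sign at most once as $k$ increases, establishing unimodality of $(c_k)$.

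The main obstacle is part (ii): the pairing trick in (i) and the atom decomposition in (iii) are essentially mechanical once set up, whereas upgrading from log-concavity-in-both-factors to log-concavity-in-one-factor requires the delicate sign-tracking described above, tying the monotone log-ratios of $f$ to the single sign-change location in $(c_{k+1}-c_k)$. Careful index bookkeeping is the recurring technical hurdle throughout, especially in the symmetrization step of (i).
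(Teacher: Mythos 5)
The paper does not prove this lemma at all: it is quoted verbatim from Stanley's survey \cite{Stan} and used as a black box, so there is no in-paper argument to compare yours against. That said, your three sketches are essentially the standard proofs from that reference, and they are sound. For (i), your symmetric pairing of the double sums is exactly the Cauchy--Binet expansion of $c_k^2-c_{k-1}c_{k+1}$ into products of $2\times 2$ Toeplitz minors; note that the nonnegativity of a minor $a_ia_{i'}-a_{i-1}a_{i'+1}$ with $i\le i'$ requires chaining the basic inequality through the intermediate ratios $a_{t+1}/a_t$, which is where the hypothesis of strictly positive (hence internally nonvanishing) coefficients is actually used. For (iii), the decomposition into palindromic atoms $P_{i,d}$ with the nonnegative weights $a_i-a_{i-1}$, together with the observation that all the products $P_{i,m}P_{j,n}$ are symmetric and unimodal about the \emph{common} center $(m+n)/2$, is precisely Stanley's argument; the common center is the point that must be stated explicitly, since sums of unimodal polynomials with different modes need not be unimodal. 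For (ii), your sign-splitting of $c_{k+1}-c_k=\sum_j a_{k+1-j}(b_j-b_{j-1})$ works: writing $r_j=a_{k+2-j}/a_{k+1-j}$, log-concavity makes $r_j$ nondecreasing in $j$, so the positive part at step $k+1$ is at most $r_\mu$ times the positive part at step $k$ while the negative part is at least $r_\mu$ times the old one, giving the single sign change. The only genuine bookkeeping issue is the terms that enter or leave the summation window as $k$ increments (where $a_{k+1-j}=0$ but $a_{k+2-j}\neq 0$); these occur only for $k+1<\mu$ or symmetrically past the other end, where the difference already has the required sign, so the argument closes. In short: correct, but it is a reconstruction of the cited source rather than an alternative to anything in the paper.
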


\medskip 
The following theorem gives a characterization of the graphs having log-concave or real independence domination roots.
\begin{theorem}\label{cdi}
Given two graphs $G$ and $H$, let $\mathcal{C}$ be a clique cover of $G$. Let $D_i(H, x) = ax^2+bx,$ where $a, b$ are nonnegative integers. 

 {\rm $(i)$} If both $I(G, x)$ and $D_i(H, x)$ have only real roots, then so does $D_i(G^{\Delta}(H),x)$.
 
 {\rm $(ii)$} If $I(G, x)$ is log-concave and $a = 0$, then so is $D_i(G^{\Delta}(H),x)$.
\end{theorem}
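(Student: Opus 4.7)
The plan is to exploit the factored form coming out of Theorem~\ref{cd}. Expanding $I(G,y)=\sum_{k=0}^{\alpha}i_{k}y^{k}$ with $\alpha=\alpha(G)$ and distributing turns the formula there into
\[
D_i(G^{\Delta}(H),x)=\sum_{k=0}^{\alpha}i_{k}\,x^{k}\,D_i(H,x)^{q-k}.
\]
Substituting $D_i(H,x)=x(ax+b)$ and pulling out the common factors $x^{q}$ and $(ax+b)^{q-\alpha}$ (the latter being legitimate because $q\geq\alpha$, as already observed) yields the key identity
\[
D_i(G^{\Delta}(H),x)=x^{q}(ax+b)^{q-\alpha}P(x),\qquad P(x):=\sum_{k=0}^{\alpha}i_{k}(ax+b)^{\alpha-k}.
\]
A direct check shows $P(x)=I^{*}(G,\,ax+b)$, where $I^{*}(G,y):=y^{\alpha}I(G,1/y)$ is the reciprocal polynomial of $I(G,y)$.

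For part~(i), the idea is to read the roots straight off this factored form. Since $i_{0}=I(G,0)=1\neq 0$, no root of $I(G,y)$ is zero, so the reciprocal polynomial $I^{*}(G,y)$ has only real roots whenever $I(G,y)$ does (its roots are exactly the reciprocals of those of $I$). When $a>0$, the affine substitution $y=ax+b$ is invertible and preserves real-rootedness, so $P(x)=I^{*}(G,ax+b)$ is real-rooted; the remaining factors $x^{q}$ and $(ax+b)^{q-\alpha}$ contribute only the real roots $0$ and $-b/a$. The degenerate case $a=0$ collapses $D_i(G^{\Delta}(H),x)$ to a scalar multiple of $x^{q}$, still real-rooted.

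For part~(ii), the hypothesis $a=0$ makes $ax+b=b$ a positive constant, so the identity above degenerates to
\[
D_i(G^{\Delta}(H),x)=x^{q}\sum_{k=0}^{\alpha}i_{k}b^{q-k}=Cx^{q}
\]
for a single positive constant $C$. A monomial has a coefficient sequence with only one nonzero entry, so the inequalities $a_{k}^{2}\geq a_{k-1}a_{k+1}$ hold trivially, and the log-concavity of $I(G,x)$ is not even needed here.

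The main conceptual step is recognizing that the inner sum is a reciprocal polynomial of $I(G,\cdot)$ evaluated at the affine expression $ax+b$; once that identification is in place, each statement reduces to a one-line verification using invariance of real-rootedness under affine substitutions. The only technical subtlety is handling $a=0$ separately in~(i), and it is precisely the same degeneration that renders~(ii) essentially vacuous.
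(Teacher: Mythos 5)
Your argument is correct, and for part (i) it is essentially the paper's argument in different clothing: the paper writes $I(G,x)=\prod_{i=1}^{\alpha(G)}(a_ix+1)$ (using the real-rootedness hypothesis) and substitutes into Theorem~\ref{cd} to get $D_i(G^{\Delta}(H),x)=[D_i(H,x)]^{q-\alpha(G)}\prod_{i=1}^{\alpha(G)}[ax+(b+a_i)]$, which is exactly your $x^{q}(ax+b)^{q-\alpha}\prod_i(ax+b+a_i)$; your reformulation via the reciprocal polynomial $I^{*}(G,ax+b)$ and invariance of real-rootedness under invertible affine substitution is the same factorization reached without explicitly naming the linear factors, and it has the minor advantage of cleanly isolating the degenerate case $a=0$. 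Where you genuinely diverge is part (ii). The paper's proof of (ii) reuses the product-of-linear-factors decomposition (which presupposes real-rootedness of $I(G,x)$ -- not a hypothesis of (ii)) and then appeals to Lemma~\ref{l31}(ii), which only yields unimodality of a product, not log-concavity; as written that argument does not establish the stated conclusion. Your expansion $D_i(G^{\Delta}(H),x)=x^{q}\sum_{k}i_{k}(ax+b)^{q-k}$ needs no real-rootedness, and it correctly exposes that when $a=0$ the polynomial collapses to the monomial $b^{q}I(G,1/b)\,x^{q}$, so (ii) holds trivially and the log-concavity hypothesis on $I(G,x)$ is superfluous. So your treatment of (ii) is both more elementary and more honest than the paper's, though it also reveals that the statement (and its downstream corollary on $K_{t,n}\circ K_1$) is weaker than the paper seems to intend; a nonvacuous version would require a different normalization of $D_i(H,x)$ (e.g.\ $b=0$, where the coefficient sequence of $D_i(G^{\Delta}(H),x)$ is $i_k a^{q-k}$ and log-concavity genuinely transfers).
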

\begin{proof}
Let $|\mathcal{C}|=q$ and $I(G, x) =\prod\limits_{i=1}^{\alpha(G)}(a_ix + 1)$, where $a_i\geq 0$. Since $I(G, x)$ has only real roots, by 
Theorem \ref{cd} we have 
\begin{eqnarray*}
D_i(G^{\Delta}(H), x) &=&[D_i(H,x)]^q\prod\limits_{i=1}^{\alpha(G)}\big(\frac{a_ix}{ax^2+bx} + 1\big)\\
&=&[D_i(H,x)]^{q-\alpha(G)}\prod\limits_{i=1}^{\alpha(G)}[ax+(b+a_i)].
\end{eqnarray*}
Note that $ax+(b+a_i)$ has only real roots, and $(ii)$ follows from part $(ii)$ of Lemma \ref{l31}. So  $(i)$ and $(ii)$ hold. 
\qed
\end{proof}

\begin{corollary} The independent domination polynomial of $K_{t,n}\circ K_1$ is log-concave for every $t$ and is therefore unimodal.
\end{corollary}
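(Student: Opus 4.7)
The plan is to compute $D_i(K_{t,n}\circ K_1,x)$ explicitly via the corona formula stated just before the corollary and observe that it collapses to a monomial, for which log-concavity and unimodality are automatic.

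Applying $D_i(G\circ H,x)=[D_i(H,x)]^{|V(G)|}\,I\!\left(G,\frac{x}{D_i(H,x)}\right)$ with $G=K_{t,n}$ and $H=K_1$, and using $D_i(K_1,x)=x$, I would expand $I(K_{t,n},y)=\sum_k i(K_{t,n},k)y^k$ and collect powers to obtain
\[
D_i(K_{t,n}\circ K_1,x)=\sum_{k\ge 0}i(K_{t,n},k)\,x^{k}\cdot x^{(t+n)-k}=I(K_{t,n},1)\,x^{t+n}.
\]
Since every independent set of $K_{t,n}$ lies entirely in one of the two parts, $I(K_{t,n},1)=2^t+2^n-1$, so
\[
D_i(K_{t,n}\circ K_1,x)=(2^t+2^n-1)\,x^{t+n}
\]
is a monomial. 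A sequence with only a single nonzero entry trivially satisfies every log-concavity inequality $a_k^2\ge a_{k-1}a_{k+1}$, and it is plainly unimodal, so the corollary follows with no real obstacle.

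An alternative route would be to invoke Theorem \ref{cdi}(ii) directly: since $D_i(K_1,x)=x$ gives $a=0$ in that theorem's notation, it suffices to show that $I(K_{t,n},x)=(1+x)^t+(1+x)^n-1$ is log-concave. One can do this by factoring $(1+x)^t+(1+x)^n=(1+x)^{\min(t,n)}\bigl[(1+x)^{|t-n|}+1\bigr]$, verifying log-concavity of each factor (binomial log-concavity plus a minor constant-term check), applying Lemma \ref{l31}(i), and noting that subtracting $1$ from the constant term only strengthens the sole affected inequality. But the direct monomial computation makes this detour unnecessary.
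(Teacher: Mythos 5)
Your proposal is correct, and your primary argument takes a genuinely different and more elementary route than the paper. The paper proves the corollary by invoking Theorem \ref{cdi}(ii) with $H=K_1$ and then verifying that $I(K_{t,n},x)=(1+x)^t+(1+x)^n-1$ is log-concave via the factorization $(1+x)^t\bigl[1+(1+x)^{n-t}\bigr]$ and Lemma \ref{l31}. Your direct computation bypasses all of this: since $D_i(K_1,x)=x$, the corona formula gives $D_i(G\circ K_1,x)=x^{|V(G)|}I(G,1)$, a monomial --- which is just the observation (already made in the paper's introduction) that $G\circ K_1$ is well-covered, so its independent domination polynomial is a single term and log-concavity and unimodality are vacuous. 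Your count $I(K_{t,n},1)=2^t+2^n-1$ is right. What your route buys is transparency: it shows the corollary is trivially true and that the hypothesis ``$I(K_{t,n},x)$ is log-concave'' is not actually needed (indeed, whenever $a=0$ in Theorem \ref{cdi}, the resulting polynomial is a monomial). What the paper's route buys is a worked illustration of Theorem \ref{cdi}(ii); your sketched alternative reproduces that argument and in fact fills in details the paper glosses over, namely that the product of the two log-concave factors should be handled by Lemma \ref{l31}(i) rather than (ii), and that subtracting $1$ from the constant term only weakens the single affected inequality $a_1^2\ge a_0a_2$. Either version is acceptable; the monomial computation is the cleaner proof.
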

\begin{proof}
To show that $D_i(K_{t,n}\circ K_1)$ is log-concave, we only need to prove that $I(K_{t,n}, x)$
is log-concave by virtue of Theorem \ref{cdi} (ii) for $H = K_1$. We have 
\[I(K_{t,n},x)=(1+x)^t+(1+x)^n-1.\]
Without loss of generality, we can assume $t \leq n$. Thus,
\[(1+x)^t+(1+x)^n=(1+x)^t[1+(1+x)^{n-t}].\]
It follows from Lemma \ref{l31} (ii) that $(1+x)^t+(1+x)^n$ is log-concave. Consequently, it is
clear that $I(K_{t,n}, x)$ is log-concave. This completes the proof. \qed
\end{proof}

\begin{remark}
The graph $H$ in Theorem \ref{cdi} can be a disconnected graph.
\end{remark}


\begin{definition}
 A $(k, n)$-path, denoted by $P^k_n$, begins with $k$-clique on $\{v_1, v_2, \dots, v_k\}$. For $i= k+ 1$ to $n$, let vertex $v_i$ be adjacent to vertices $\{v_{i-1}, v_{i-2}, \dots, v_{i-k}\}$ only. (see Figure \ref{Figure1}).
\end{definition}

\begin{figure}[!ht]
\hspace{3.9cm}
\includegraphics[width=6.cm,height=2.4cm]{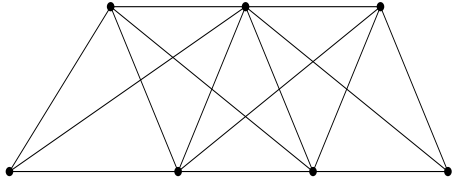}
\caption{\label{Figure1} The  3-path on 7 vertices. }
\end{figure}

Since $P^k_n$ is a claw-free graph, by Theorems \ref{thid} and \ref{cdi} have the following. 
\begin{corollary}
	 Assume that $G$ is  a graph, $D_i(G, x)$ has only real zeros and $D_i(G, x) = ax^2 +bx$, where $0 < a, b \in \mathbb{N}$. Then $D_i((P^k_n)^{\Delta}(G) , x)$ has only real roots. 
\end{corollary}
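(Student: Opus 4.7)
My plan is to apply Theorem \ref{cdi}(i) with $P^k_n$ playing the role of the base graph (called $G$ in that theorem) and the graph $G$ from the corollary statement playing the role of $H$. The hypotheses of that theorem require three things: a clique cover of $P^k_n$, real-rootedness of $I(P^k_n,x)$, and real-rootedness of $D_i(G,x)$, the last of which is assumed. A clique cover of $P^k_n$ is immediate, for instance the cover by edges together with any isolated starting $k$-clique, or simply the set of singletons. So the real work is to justify that $I(P^k_n,x)$ has only real zeros, and the claim in the text is that this follows from Theorem \ref{thid} because $P^k_n$ is claw-free.

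The main obstacle, and the only nontrivial step, is therefore verifying that $P^k_n$ is claw-free. I would argue this by inspecting the neighborhood of an arbitrary vertex $v_i$. By the recursive construction, the neighbors of $v_i$ are the vertices $v_j$ with $0<|i-j|\le k$, i.e.\ the set
\[
N(v_i)\subseteq\{v_{i-k},v_{i-k+1},\dots,v_{i-1}\}\cup\{v_{i+1},v_{i+2},\dots,v_{i+k}\},
\]
with the obvious truncation near the endpoints. The first block has index diameter at most $k-1$, so any two of its vertices $v_j,v_{j'}$ satisfy $|j-j'|\le k-1<k$ and are therefore adjacent; hence it induces a clique. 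The same holds for the second block. Consequently, $N(v_i)$ is the union of two cliques. By pigeonhole, any three neighbors of $v_i$ must contain two from the same block, and those two are adjacent, so no induced $K_{1,3}$ can be centered at $v_i$. Since $v_i$ was arbitrary, $P^k_n$ is claw-free.

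With claw-freeness established, Theorem \ref{thid} (Chudnovsky--Seymour) gives that $I(P^k_n,x)$ has only real zeros. Moreover, $D_i(G,x)=ax^2+bx=x(ax+b)$ has only real zeros, namely $0$ and $-b/a$, which are real since $a,b$ are positive integers. All hypotheses of Theorem \ref{cdi}(i) are thus met (with $G$ of the theorem equal to $P^k_n$ and $H$ of the theorem equal to $G$ of the corollary), so
\[
D_i\!\left((P^k_n)^{\Delta}(G),x\right)
\]
has only real roots, as desired.
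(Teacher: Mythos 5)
Your proposal is correct and follows exactly the paper's route: the paper likewise disposes of the corollary by asserting that $P^k_n$ is claw-free, invoking the Chudnovsky--Seymour theorem for the real-rootedness of $I(P^k_n,x)$, and then applying Theorem \ref{cdi}(i). The only difference is that you actually verify the claw-freeness of $P^k_n$ via the two-clique decomposition of each neighborhood, a detail the paper leaves unproved.
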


Noting that the graph $G$ is claw-free, we immediately obtain the following corollary by virtue of
Theorems \ref{thid} and \ref{cdi}.
\begin{corollary}\label{cc} 
Let $H$ be a graph with $\alpha(H) \leq 2$ and $\mathcal{C}$ be a clique cover of another claw-free graph  $G$. If $D_i(H, x)$ has only real zeros, then so does $D_i(G^{\Delta}(H), x)$. In particular, so does $D_i(G\circ H, x)$.
\end{corollary}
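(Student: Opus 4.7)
The plan is to apply Theorem \ref{cdi}(i) directly, after checking its two hypotheses on $G$ and $H$. The first hypothesis, that $I(G,x)$ has only real zeros, follows immediately from the Chudnovsky--Seymour result (Theorem \ref{thid}) applied to the claw-free graph $G$.

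For the second hypothesis, I will argue that the assumption $\alpha(H)\leq 2$ forces $D_i(H,x)$ into the shape $ax^2+bx$ required by Theorem \ref{cdi}. Every independent dominating set of $H$ has cardinality between $\gamma_i(H)$ and $\alpha(H)\leq 2$, so $D_i(H,x)$ has degree at most $2$, and since every independent domination polynomial has zero constant term, it must be of the form $ax^2+bx$ with $a,b\in\mathbb{Z}_{\geq 0}$. Combined with the hypothesis that $D_i(H,x)$ has only real zeros, this places us exactly in the situation of Theorem \ref{cdi}(i), which yields that $D_i(G^{\Delta}(H),x)$ has only real zeros.

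For the ``in particular'' clause, I will observe that the corona product $G\circ H$ is precisely $G^{\Delta}(H)$ when the clique cover $\mathcal{C}$ of $G$ is taken to consist of singletons --- a valid clique cover for any graph, as noted after the definition of $G^{\Delta}(H)$ in Section 3. Hence the conclusion for the general compound graph transfers verbatim to the corona.

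I do not anticipate any genuine obstacle: the corollary is essentially a specialization of Theorem \ref{cdi}(i) once Theorem \ref{thid} is invoked. The only step that merits an explicit sentence is the derivation of the form $D_i(H,x)=ax^2+bx$ from $\alpha(H)\leq 2$, which is immediate from the definition of the independent domination polynomial.
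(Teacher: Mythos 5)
Your proposal is correct and follows exactly the route the paper intends: the paper states this corollary without a written proof, citing only Theorems \ref{thid} and \ref{cdi}, and your argument supplies precisely the missing details (claw-freeness of $G$ gives real roots of $I(G,x)$, the bound $\alpha(H)\leq 2$ together with the zero constant term forces $D_i(H,x)=ax^2+bx$, and the corona is the compound graph for the singleton clique cover).
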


\begin{example} 
Consider the centipede graph, $P_n\circ K_1$,  and the caterpillar graph, $P_n\circ \bar{K_2}$: Since $P_n$, i.e., the path with $n$ vertices, is a claw-free graph, by Corollary \ref{cc},  $D_i(P_n\circ K_1,x)$, and $D_i(P_n\circ \bar{K_2},x)$ have only real roots.
\end{example}

\begin{example} 
The $n$-sunlet, $C_n \circ K_1$, where $C_n$ is the cycle with $n$ vertices. By Corollary \ref{cc},  $D_i(C_n\circ K_1, x)$ has only real roots,  since $C_n$ is a claw-free graph. In addition, we also can verify that $D_i(C_n\circ K_r, x)$  have only real zeros for $r\geq 1$.
\end{example}

In \cite{Lev} Levit and Mandrescu constructed a family of graphs $H_n$  from the path $P_n$  by
the ``clique cover construction", as shown in Figure \ref{figure1}, for even $n$,  we take $\mathcal{C} =\{\{1,2\}, \{3,4\}, ...,$ $ \{n-1,n\}\}$, and
for odd $n$, we take  $\mathcal{C} = \{\{1\},\{2,3\}, ..., \{n-3,n-2\},\{n-1,n\}\}$.  By $H_0$ we mean the null graph. 

\begin{figure}[!ht]
\hspace{1.9cm}
\includegraphics[width=11cm,height=4.1cm]{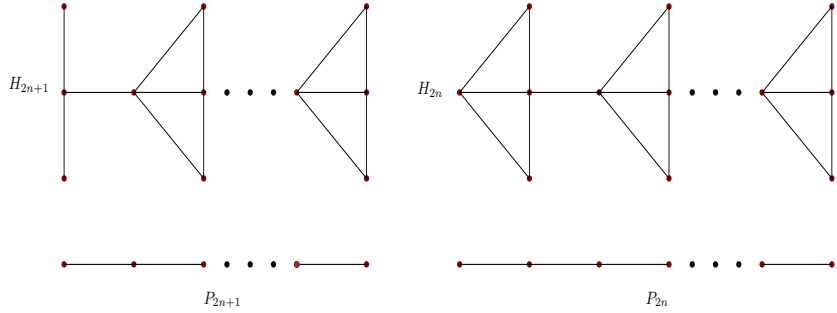}
\caption{ \label{figure1} Graphs $H_{2n+1}$ and $H_{2n}$, respectively. }
\end{figure}

\begin{example} 
Consider the $H_n$ graphs, $P_n^{\Delta}(\bar{K_2})$: Since $P_n$  is a claw-free graph, so by  Theorems \ref{thid} and \ref{cdi},  $D_i(H_n, x)$ has only real zeros. Consequently, $D_i(H_n, x)$ is log-concave and unimodal.
\end{example}

\bigskip
 

\section{Some graphs related to paths }

In this section, we  count the number of independent dominating sets of paths and some graphs related to paths. 

\subsection{Independent domination polynomial of paths} 

 A path is a connected graph in which two vertices have degree
one and the remaining vertices have degree two.  The following result was also proven independently in \cite{Dod} as  Theorem 4.2.

\begin{theorem} {\rm\cite{Dod}}
	 For every $n\geq 4$, 
\[D_i(P_n, x) = xD_i(P_{n-2}, x) + xD_i(P_{n-3}, x),\]
where $
D_i(P_1, x) = x, ~ D_i(P_2, x) = 2x$ and $D_i(P_3, x) = x^2 + x$.
\end{theorem}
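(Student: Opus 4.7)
The plan is to prove the recurrence by conditioning on the status of the endpoint $v_1$ in an independent dominating set, exactly as one does for the classical matching/independence recurrences on paths. Label the vertices of $P_n$ by $v_1,v_2,\dots,v_n$ in order. Since an independent dominating set is the same as a maximal independent set, and since $v_1$ has a unique neighbor $v_2$, one of the following two cases must hold for any independent dominating set $D$ of $P_n$: either $v_1\in D$, or $v_1\notin D$ and $v_2\in D$ (the case $v_1,v_2\notin D$ is impossible because then $v_1$ would not be dominated).

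In the first case, $v_1\in D$ forces $v_2\notin D$ by independence. I will show that the map $D\mapsto D\setminus\{v_1\}$ is a bijection between independent dominating sets of $P_n$ containing $v_1$ and independent dominating sets of the subpath $P_{n-2}$ induced on $\{v_3,\dots,v_n\}$. The point is that $v_2$ is already dominated by $v_1$, so the remaining conditions for $D$ reduce exactly to $D\cap\{v_3,\dots,v_n\}$ being a maximal independent set of $P_{n-2}$; the only thing worth checking is that domination of $v_3$ in $P_n$ (its $P_n$-neighbors are $v_2$ and $v_4$, with $v_2\notin D$) is equivalent to domination of $v_3$ in $P_{n-2}$ (its only neighbor there is $v_4$). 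Since $|D|=|D\setminus\{v_1\}|+1$, this case contributes $x\,D_i(P_{n-2},x)$.

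In the second case, $v_1\notin D$ and $v_2\in D$ force $v_3\notin D$. I will verify that $D\mapsto D\setminus\{v_2\}$ is a bijection between such sets and the independent dominating sets of the subpath $P_{n-3}$ induced on $\{v_4,\dots,v_n\}$; again $v_1$ and $v_3$ are dominated by $v_2$, and the domination condition on $v_4$ reduces to the corresponding condition in $P_{n-3}$ because $v_3\notin D$. This case contributes $x\,D_i(P_{n-3},x)$. Summing the two cases gives the recurrence. The initial values $D_i(P_1,x)=x$, $D_i(P_2,x)=2x$, $D_i(P_3,x)=x^2+x$ are verified directly by listing the maximal independent sets.

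I do not expect any real obstacle here; the recurrence is of the standard ``transfer matrix on a path'' type and the combinatorial argument is routine once the two-case split is set up. The only subtle point to state carefully is the equivalence between ``maximal independent set of $P_n$'' and ``independent dominating set of $P_n$'', which is what makes the reduction to the two smaller paths clean, together with the verification that no vertex of the deleted initial segment needs any further domination once $v_1$ or $v_2$ is placed in $D$.
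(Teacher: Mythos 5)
Your proof is correct and follows essentially the same argument as the paper: condition on whether the first vertex $v_1$ lies in the independent dominating set, reducing to $P_{n-2}$ when it does and to $P_{n-3}$ when it does not (so that $v_2$ must be chosen). Your version just spells out the bijections and the domination check at the boundary vertex more explicitly than the paper does.
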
 
\begin{proof}
If the first vertex of the path is in an independent dominating set, then the second is dominated and
therefore it can not be in the independent dominating set. This case will be counted by $xD_i(P_{n-2}, x)$. If the
first vertex is not in an independent dominating set, then the second vertex must be in the independent dominating set. This gives
$xD_i(P_{n-3}, x)$ and the theorem follows.\qed
\end{proof}

\begin{corollary} \label{corp}
 For every $n\geq 4$,
\[d_i(P_n,k)= d_i(P_{n-2},k-1) + d_i(P_{n-3},k-1)\]
 with initial conditions  $d_i(P_1,1) = 1, d_i(P_2,1) = 2, d_i(P_3,1) = 1$ and $d_i(P_3,2) = 1$.
\end{corollary}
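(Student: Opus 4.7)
The plan is to obtain the corollary as an immediate coefficient-extraction from the polynomial recurrence established in the previous theorem. Writing
\[
D_i(P_n,x)=\sum_{k} d_i(P_n,k)\, x^k,
\]
the identity $D_i(P_n,x)=xD_i(P_{n-2},x)+xD_i(P_{n-3},x)$ gives, on the right-hand side,
\[
xD_i(P_{n-2},x)+xD_i(P_{n-3},x)=\sum_{k}\bigl(d_i(P_{n-2},k-1)+d_i(P_{n-3},k-1)\bigr)x^{k}.
\]
Comparing the coefficient of $x^k$ on both sides yields the recurrence
\[
d_i(P_n,k)=d_i(P_{n-2},k-1)+d_i(P_{n-3},k-1)
\]
for every $n\ge 4$.

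For the initial conditions I would just enumerate directly. The path $P_1$ consists of a single vertex, which is its unique (independent and dominating) set, so $d_i(P_1,1)=1$. For $P_2=v_1v_2$, each singleton $\{v_1\}$ and $\{v_2\}$ is maximal independent (each vertex dominates the other), giving $d_i(P_2,1)=2$. For $P_3=v_1v_2v_3$, the only independent dominating set of size one is $\{v_2\}$, and the only one of size two is $\{v_1,v_3\}$, so $d_i(P_3,1)=d_i(P_3,2)=1$.

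Since this corollary is purely a transcription of a polynomial identity into its coefficient sequence, I do not anticipate any genuine obstacle; the only thing to be mildly careful about is the index shift coming from the factor $x$ multiplying $D_i(P_{n-2},x)$ and $D_i(P_{n-3},x)$, which produces the $k-1$ on the right-hand side rather than $k$.
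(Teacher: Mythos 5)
Your proof is correct and is exactly the intended argument: the paper states this corollary without proof as an immediate coefficient comparison in the recurrence $D_i(P_n,x)=xD_i(P_{n-2},x)+xD_i(P_{n-3},x)$, and your verification of the initial conditions by direct enumeration matches the values given in the preceding theorem.
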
 

Moreover, we can prove an explicit formula for the independent domination polynomial
of the path $P_n$. To do this,  we consider  the ordinary generating function for the numbers  $d_i(P_n,k)$ which we denote it simply by $d(n,k)$. 

\begin{theorem} 
	If $F(x, y) =\sum\limits_{n\geq 1}\sum\limits_{k\geq 1} d_i(n,k) x^n y^k$, then \[F(x, y) =\frac{x(1+x)^2y}{1-(x^2+x^3)y}\]
\end{theorem}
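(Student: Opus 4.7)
The plan is to convert the recurrence of Corollary~\ref{corp} into a functional equation for $F(x,y)$. Multiplying $d_i(P_n,k)=d_i(P_{n-2},k-1)+d_i(P_{n-3},k-1)$ by $x^n y^k$ and summing over all $n\geq 4$ and $k\geq 1$ should collapse the right-hand side into two shifted copies of $F$, while the left-hand side gives $F$ minus a few explicit low-order terms coming from the initial conditions $d_i(P_1,1)=1$, $d_i(P_2,1)=2$, $d_i(P_3,1)=1$, $d_i(P_3,2)=1$.

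First I would isolate the initial-condition polynomial
\[
F_0(x,y)\;=\;\sum_{n=1}^{3}\sum_{k\geq 1} d_i(P_n,k)\,x^n y^k\;=\;xy+2x^2y+x^3y+x^3y^2\;=\;xy(1+x)^2+x^3y^2,
\]
so that $F(x,y)-F_0(x,y)=\sum_{n\geq 4,\,k\geq 1} d_i(P_n,k)\,x^n y^k$. Next, on the right-hand side I would reindex: in $\sum_{n\geq 4,\,k\geq 1} d_i(P_{n-2},k-1)x^n y^k$ substitute $m=n-2$, $j=k-1$ to obtain $x^2y\sum_{m\geq 2,\,j\geq 0}d_i(P_m,j)x^m y^j$, and similarly the second sum becomes $x^3y\sum_{m\geq 1,\,j\geq 0}d_i(P_m,j)x^m y^j$. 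Since $d_i(P_m,0)=0$ for $m\geq 1$, the inner sums start at $j\geq 1$, so they equal $F(x,y)-xy$ and $F(x,y)$ respectively (the $xy$ subtraction accounting for the missing $m=1$ term in the first sum).

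Putting this together yields the equation
\[
F(x,y)-xy(1+x)^2-x^3y^2\;=\;x^2y\bigl(F(x,y)-xy\bigr)+x^3y\,F(x,y).
\]
The $-x^3y^2$ on the left cancels the $-x^3y^2$ coming from $x^2y\cdot(-xy)$ on the right, leaving
\[
F(x,y)\bigl(1-x^2y-x^3y\bigr)\;=\;xy(1+x)^2,
\]
from which the claimed closed form follows upon dividing by $1-(x^2+x^3)y$.

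I expect no real obstacle here beyond careful bookkeeping. The one subtle point is checking that the boundary terms from the reindexed sums (namely the missing $m=1$ contribution in the first sum and the absence of $j=0$ terms) combine with $F_0(x,y)$ to cancel the spurious $x^3y^2$ cleanly; the cancellation above shows that the identity $F_0(x,y)-x^3y^2=xy(1+x)^2$ is exactly what is needed for the algebra to close.
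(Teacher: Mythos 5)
Your proposal is correct and follows essentially the same route as the paper: both convert the recurrence of Corollary~\ref{corp} into the identity $\bigl(1-(x^2+x^3)y\bigr)F(x,y)=x(1+x)^2y$ by reindexing the shifted sums and tracking the boundary terms $xy+2x^2y+x^3y+x^3y^2$, with the same cancellation of the spurious $x^3y^2$. The only difference is organizational --- you solve a functional equation $F-F_0=x^2y(F-xy)+x^3yF$, while the paper expands the product $\bigl(1-(x^2+x^3)y\bigr)F$ directly --- which is the same computation in a different order.
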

\begin{proof}
Consider the following identity: 
\begin{eqnarray*}
\big(1-(x^2+x^3)y\big)F(x, y) &=&F(x, y)-x^2y F(x, y)-x^3yF(x, y)\\
&=&\sum\limits_{n\geq 1}\sum\limits_{k\geq 1} d_i(n,k) x^n y^k - \sum\limits_{n\geq 1}\sum\limits_{k\geq 1} d_i(n,k) x^{n+2} y^{k+1}\\
&& -\sum\limits_{n\geq 1}\sum\limits_{k\geq 1} d_i(n,k) x^{n+3} y^{k+1}\\
&=&\sum\limits_{n\geq 1}\sum\limits_{k\geq 1} d_i(n,k) x^n y^k - \sum\limits_{n\geq 3}\sum\limits_{k\geq 2} d_i(n-2,k-1) x^{n} y^{k}\\
&& -\sum\limits_{n\geq 4}\sum\limits_{k\geq 2} d_i(n-3,k-1) x^{n} y^{k}\\
&=&d_i(1,1) x y + d_i(2,1) x^{2} y + d_i(3,1) x^{3} y + d_i(3,2) x^{3} y^{2} \\
&&+\sum\limits_{n\geq 4}\sum\limits_{k\geq 2} d_i(n,k) x^{n} y^{k}-d_i(1,1) x^{3} y^{2} \\
&&  -\sum\limits_{n\geq 4}\sum\limits_{k\geq 2} d_i(n-2,k-1) x^{n} y^{k} \\
&&-\sum\limits_{n\geq 4}\sum\limits_{k\geq 2} d_i(n-3,k-1) x^{n} y^{k} \\
&=& xy+ 2x^2y+x^3y+x^3y^2-x^3y^2-\sum\limits_{n\geq 4}\\
&&\sum\limits_{k\geq 2}\big(d_i(n,k) - d_i(n-2,k-1) - d_i(n-3,k-1) \big)x^{n} y^{k}\\
&=&x(1+x)^2y
\end{eqnarray*}
where $d_i(n,k)=d_i(P_n,k) = 0$ if $n < k$.\qed
\end{proof}

If we determine the formal power series of $F(x, y)$ expanded in powers of $y$, we have 
\begin{eqnarray*}
F(x, y) &=&\frac{x(1+x)^2y}{1-(x^2+x^3)y}\\
&=&x(1+x)^2y \sum\limits_{k\geq 0} (x^2+x^3)^ky^k\\
&=& \sum\limits_{k\geq 1} x(1+x)^2(x^2+x^3)^{k-1}y^k\\
&=& \sum\limits_{k\geq 1}\big(\sum\limits_{n\geq 1} d_i(P_n,k)x^n\big)y^k,
\end{eqnarray*}
and then for every $k\geq 1$
\begin{eqnarray*}\label{eq}
\sum\limits_{n\geq 1} d_i(P_n,k)x^n &=& x(1+x)^2(x^2+x^3)^{k-1}\\
&=& x^{2k-1}(1+x)^{k+1}
\end{eqnarray*}
is a polynomial denoted by $D_i(P_n,x)$ such that
\[D_i(P_n,x)=\sum\limits_{n\geq k}d_i(P_n,k)x^n = \sum\limits_{t\geq 0}d_i(P_{k+t},k)x^{k+t}\] 

since $D_i(P_n,k) = 0$ if $n < k, k>\alpha(P_n)$ and the well-known fact that  $\gamma_i(P_n) = \lceil \frac{n}{2}\rceil$ for every $n \geq 1$.

\begin{theorem}
For every $k \geq 1$ and $t \geq 0$, the number of independent dominating $k$-sets of the path $P_{k+t}$  is 
\[ d_i(P_{k+t},k)= {k+1 \choose t-k+1}.\]
\end{theorem}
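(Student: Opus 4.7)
The proof is essentially a one-line coefficient extraction from the generating function identity that has already been established in the computation immediately preceding the theorem statement. My plan is to leverage that identity directly rather than setting up anything new.

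First, I would recall from the preceding derivation that for every $k \geq 1$,
\[
\sum_{n \geq 1} d_i(P_n,k)\, x^n \;=\; x^{2k-1}(1+x)^{k+1}.
\]
This was obtained by expanding the bivariate generating function $F(x,y)$ in powers of $y$ and reading off the coefficient of $y^k$, so it is already in hand.

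Next, I would extract the coefficient of $x^{k+t}$ on both sides. On the left this is by definition $d_i(P_{k+t},k)$. On the right, the factor $x^{2k-1}$ shifts the exponent, so
\[
[x^{k+t}]\, x^{2k-1}(1+x)^{k+1} \;=\; [x^{t-k+1}]\, (1+x)^{k+1} \;=\; \binom{k+1}{t-k+1},
\]
by the binomial theorem. This is exactly the claimed identity.

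There is essentially no obstacle here; the only thing worth remarking is the boundary behavior, namely that $\binom{k+1}{t-k+1}$ is understood to be zero when $t-k+1 < 0$ or $t-k+1 > k+1$. These cases match the known bounds $\lceil n/2 \rceil = \gamma_i(P_n) \leq k \leq \alpha(P_n) = \lceil n/2 \rceil$ translated to $n=k+t$, i.e.\ $k-1 \leq t \leq 2k$, outside of which $d_i(P_{k+t},k)=0$ anyway. Hence the formula holds for all $k \geq 1$ and $t \geq 0$, completing the argument.
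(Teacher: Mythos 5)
Your proposal is correct and is essentially identical to the paper's proof, which likewise expands $x^{2k-1}(1+x)^{k+1}$ by the binomial theorem and reads off the coefficient of $x^{k+t}$. (One side remark contains a slip: $\gamma_i(P_n)=\lceil n/3\rceil$, not $\lceil n/2\rceil$, and $\alpha(P_n)=\lceil n/2\rceil$; the resulting bounds $k-1\leq t\leq 2k$ you state are nevertheless the right ones, and this does not affect the argument.)
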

\begin{proof} 
For every $k \geq 1$ and $t \geq 0$, the number of independent dominating $k$-sets of the path $P_{k+t}$  is 
\[ d_i(P_{k+t},k)= {k+1 \choose t-k+1}.\] 
\begin{eqnarray*}
D_i(P_{k+t},x)&=&x^{2k-1}(1+x)^{k+1}\\
&=& x^{2k-1}\sum\limits_{l=0}^{k+1}{k+1\choose l}x^l\\
&=&\sum\limits_{l=0}^{k+1}{k+1\choose l}x^{l+2k-1}\\
&=&\sum\limits_{t=0}^{2k}{k+1 \choose t-k+1}x^{k+t}
\end{eqnarray*}
Therefore, 
\[d_i(P_{k+t},k)= {k+1 \choose t-k+1}.\]
\end{proof}
\begin{corollary}
Let $k$ be a positive integer. The number of the dominating sets of minimum cardinality for paths is
\[
\left\{
\begin{array}{lr}
d_i(P_{3k},k)= 1 & \quad\mbox{ if n=3k,}\\[7pt]
d_i(P_{3k+1},k+1)= {k+2 \choose k}&\quad\mbox{ if n=3k+1,} \\[7pt]
d_i(P_{3k+2},k+1)= {k+2 \choose k+1}&\quad\mbox{ if n=3k+2.}
\end{array}
\right.
\]
\end{corollary}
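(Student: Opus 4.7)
The plan is to derive this corollary as an immediate specialization of the explicit formula $d_i(P_{k+t},k) = \binom{k+1}{t-k+1}$ just established. The only additional input needed is that the minimum cardinality of an independent dominating set in $P_n$ is $\lceil n/3 \rceil$, which follows from the recurrence $D_i(P_n,x) = xD_i(P_{n-2},x) + xD_i(P_{n-3},x)$ together with the initial data $D_i(P_1,x)=x$, $D_i(P_2,x)=2x$, $D_i(P_3,x)=x^2+x$; each step of the recurrence increases the exponent by one while reducing the path length by two or three, so the minimum exponent of $D_i(P_n,x)$ is exactly $\lceil n/3 \rceil$.

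The main step is to split into three cases according to the residue of $n$ modulo $3$ and substitute. For $n = 3k$, the minimum cardinality is $k$, so I set $k+t = 3k$ (i.e.\ $t = 2k$) in the formula and obtain $\binom{k+1}{k+1} = 1$. For $n = 3k+1$, the minimum cardinality is $k+1$, so I apply the formula with the role of $k$ played by $k+1$ and $t$ chosen so that $(k+1)+t = 3k+1$, giving $t = 2k$ and the value $\binom{k+2}{2k-(k+1)+1} = \binom{k+2}{k}$. For $n = 3k+2$, the minimum cardinality is again $k+1$, and the choice $t = 2k+1$ yields $\binom{k+2}{2k+1-(k+1)+1} = \binom{k+2}{k+1}$.

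The only potential pitfall is a bookkeeping confusion between the two different uses of the letter ``$k$'': in the previous theorem $k$ denotes the cardinality of the independent dominating set, whereas in the corollary $k$ parametrizes the residue class of $n$ modulo $3$. Once this is kept straight the proof is a routine substitution, so I would present the three calculations in succession with a single preliminary sentence identifying $\gamma_i(P_n) = \lceil n/3 \rceil$ and invoking the preceding theorem.
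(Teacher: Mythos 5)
Your proof is correct and takes essentially the same route the paper intends: the corollary is a direct substitution into the formula $d_i(P_{k+t},k)=\binom{k+1}{t-k+1}$ at the minimum cardinality $\gamma_i(P_n)=\lceil n/3\rceil$, and all three of your case computations check out. It is worth noting that you correctly use $\gamma_i(P_n)=\lceil n/3\rceil$ (with a valid justification from the recurrence), whereas the paper's preceding text misstates this quantity as $\lceil n/2\rceil$.
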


We have  the following corollary.
\begin{corollary}
For each natural number $n$, the independent domination polynomial of the path $P_n$ is unimodal. 
\end{corollary}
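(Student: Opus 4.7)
The plan is to use the closed-form $d_i(P_n,k)=\binom{k+1}{n-2k+1}$ provided by the preceding theorem, and establish log-concavity of the coefficient sequence. The quantity $a_k:=\binom{k+1}{n-2k+1}$ is positive precisely on the contiguous integer interval $\lceil n/3\rceil\le k\le\lfloor(n+1)/2\rfloor$, so the sequence has no internal zeros; since log-concavity of a nonnegative sequence without internal zeros implies unimodality (as recalled in the introduction), it is enough to prove $a_k^{2}\ge a_{k-1}a_{k+1}$.

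The main step is an algebraic reduction of this inequality. Introduce the auxiliary variables $u=n-2k$ and $v=3k-n$, so that $u+v=k$, and rewrite the three relevant binomial coefficients as $a_k=\binom{u+v+1}{u+1}$, $a_{k-1}=\binom{u+v}{u+3}$, $a_{k+1}=\binom{u+v+2}{u-1}$. Expanding them as quotients of factorials and cancelling yields the compact identity
\[
\frac{a_k^{2}}{a_{k-1}\,a_{k+1}}=\frac{u+v+1}{u+v+2}\cdot\frac{(u+2)(u+3)}{u(u+1)}\cdot\frac{(v+1)(v+2)(v+3)}{(v-2)(v-1)\,v}.
\]
When $k$ sits at a boundary of the support one of $a_{k-1},a_{k+1}$ vanishes, and $a_k^{2}\ge a_{k-1}a_{k+1}$ is automatic. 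In the remaining interior range one has $u\ge 1$ and $v\ge 3$, so every factor above is positive. Each of the subfactors $(u+3)/(u+1)$, $(v+1)/(v-2)$, $(v+2)/(v-1)$, $(v+3)/v$ is at least $1$, so it is enough to verify $(u+v+1)(u+2)\ge(u+v+2)u$, whose difference equals $u+2v+2>0$.

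The delicate part is the algebraic simplification rather than the final inequality. Written directly in $n$ and $k$, the expression $a_k^{2}/(a_{k-1}a_{k+1})$ is an unwieldy rational function whose positivity is not transparent; the substitution $u+v=k$ is the decisive move, after which the only factor smaller than $1$ is $(u+v+1)/(u+v+2)$, and its deficit is absorbed by the single easy factor $(u+2)/u$. A viable alternative would be to show directly that $a_{k+1}/a_k$ is strictly decreasing in $k$, giving a single mode; but this costs a comparable amount of bookkeeping.
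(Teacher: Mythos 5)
Your argument is correct, and it is worth noting that the paper itself offers no proof of this corollary: it is stated immediately after the formula $d_i(P_{k+t},k)=\binom{k+1}{t-k+1}$ with no further justification. The only real-rootedness established nearby concerns $x^{2k-1}(1+x)^{k+1}=\sum_{n}d_i(P_n,k)x^n$, which is a generating function in $n$ for fixed $k$; unimodality of $D_i(P_n,x)$ requires controlling the sequence $k\mapsto d_i(P_n,k)$ for fixed $n$, which is a different cross-section of the same array. Your proof supplies exactly the missing step: the support $\lceil n/3\rceil\le k\le\lfloor (n+1)/2\rfloor$ is correctly identified as a contiguous interval, the substitution $u=n-2k$, $v=3k-n$ gives the stated factorization of $a_k^2/(a_{k-1}a_{k+1})$ (I checked it, e.g., at $n=15$, $k=7$ it yields $784/35$ as it should), the interior range where both neighbours are nonzero forces $u\ge 1$ and $v\ge 3$ so all denominators are positive, and the final reduction to $(u+v+1)(u+2)-(u+v+2)u=u+2v+2>0$ is right. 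So you have in fact proved the stronger statement that $D_i(P_n,x)$ is log-concave with no internal zeros, from which unimodality follows by the standard fact recalled in the paper's introduction. This is a genuine improvement on the paper's treatment rather than a restatement of it.
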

\subsection{Independent dominating sets of some graphs related to path}

A book graph $B_n$, is defined as follows
$V(B_n)=\{u_1,u_2\}\cup \{v_i, w_i : 1\leq i\leq n\}$ and $E(B_n)=\{u_1u_2\}\cup \{u_1v_i,~ u_2w_i,~v_iw_i : 1\leq i\leq n\}$.
We consider  the generalized book graph $B_{n,m}$ with vertex and edge sets by
$V(B_{n,m})=\{u_i: 1\leq i \leq m-2\}\cup \{v_i, w_i : 1\leq i\leq n\}$ and $E(B_{n,m})=\{u_iu_{i+1}:   1\leq i \leq m-3\}\cup \{u_iw_j : 1\leq j\leq n,~i=m-2\}\cup \{ u_1v_i : 1\leq i\leq n\}\cup \{v_iw_i : 1\leq i\leq n\}$ (see Figure \ref{figurebook}). Here, we investigate the independent dominating sets of of the book, and generalized book graphs and count the number of independent dominating sets of these graths.

\begin{figure}[ht]
\hspace{1.cm}
\includegraphics[width=12.cm,height=3.cm]{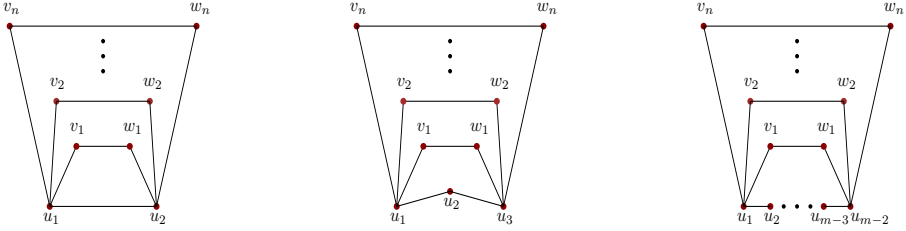}
\caption{ \label{figurebook} Graphs $B_n,~B_{n,5}$ ~and ~ $B_{n,m} $,~ respectively.}
\end{figure}

The following lemma gives a formula for the independent domination polynomial of book graphs.
\begin{lemma}
The independent domination polynomial of the book graph, $B_{n}$, for $n\geq 2$ is given by
\begin{equation*}
D_i(B_{n},x)= (2^n-2)x^n+2x^{n+1}.
\end{equation*}
\end{lemma}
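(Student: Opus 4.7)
\medskip

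\noindent\emph{Proof plan.} The plan is to classify the independent dominating sets of $B_n$ according to how they intersect the distinguished edge $\{u_1,u_2\}$. Since $u_1u_2\in E(B_n)$, any independent set $S$ contains at most one of $u_1,u_2$, so we obtain three mutually exclusive cases: $u_1\in S$, $u_2\in S$, or $u_1,u_2\notin S$. In each case the goal is to determine exactly which configurations of the remaining $2n$ vertices $\{v_i,w_i\}_{i=1}^n$ yield a maximal independent set.

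First I would handle the case $u_1\in S$. Then $u_2\notin S$ and, since $u_1v_i\in E(B_n)$, every $v_i\notin S$. Each $w_i$ has neighborhood $\{u_2,v_i\}$ disjoint from $S$, so by the domination requirement $w_i\in S$; conversely, $\{u_1\}\cup\{w_1,\dots,w_n\}$ is easily checked to be independent (no edges of the form $u_1w_i$ or $w_iw_j$ exist) and dominating. This yields exactly one set, of cardinality $n+1$. By the symmetry $v_i\leftrightarrow w_i$, $u_1\leftrightarrow u_2$, the case $u_2\in S$ likewise gives exactly one set $\{u_2\}\cup\{v_1,\dots,v_n\}$ of cardinality $n+1$, accounting for the term $2x^{n+1}$.

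Next I would analyze the case $u_1,u_2\notin S$. For each index $i$, independence forbids $\{v_i,w_i\}\subseteq S$, while the domination of $v_i$ (whose neighbors are $u_1$ and $w_i$) forces $v_i\in S$ or $w_i\in S$; hence exactly one of $v_i,w_i$ lies in $S$. This already gives $2^n$ candidate sets, each of size $n$ and automatically independent. To also dominate $u_1$, whose open neighborhood in the remaining graph is $\{v_1,\dots,v_n\}$, at least one $v_i$ must be in $S$; symmetrically, dominating $u_2$ requires at least one $w_j\in S$. The two excluded configurations are $\{v_1,\dots,v_n\}$ (all $v$'s) and $\{w_1,\dots,w_n\}$ (all $w$'s), so exactly $2^n-2$ valid sets arise, each of cardinality $n$.

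Summing the contributions from the three cases gives $(2^n-2)x^n+2x^{n+1}$, as claimed. The step that requires the most care is the third case, where one must distinguish between independence (which allows all $2^n$ transversal choices) and the stronger condition of being a \emph{maximal} independent set, in order to correctly subtract the two configurations that would otherwise leave $u_1$ or $u_2$ undominated; the first two cases are essentially forced once $u_1$ or $u_2$ is declared to lie in $S$.
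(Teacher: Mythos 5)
Your proposal is correct and follows essentially the same route as the paper: both classify the maximal independent sets by whether one of the hub vertices $u_1,u_2$ belongs to the set (giving the two sets of size $n+1$) or neither does (giving the $2^n-2$ transversals of the pairs $\{v_i,w_i\}$ that hit both a $v$ and a $w$). Your write-up is somewhat more explicit than the paper's about why these cases are exhaustive and why the two all-$v$/all-$w$ transversals must be excluded, but the underlying decomposition is identical.
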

\begin{proof}
Let $B_{n}$ be the book graph. It suffices to show that every 
 independent dominating set of this graph has
size $n$ or $n+1$, and these sets are accounted for exactly once in the above statement. 

The independent dominating sets of each size are one of the following forms:

$(i)$ Choose at least one $v_i$ and at least one $w_i$.  Note that the independent dominating sets of this form  are of size $n$.

$(ii)$ Choose the vertex $u_1~ (u_2)$, and then choose all of the $w_i$ (or all of the $v_i$). Note that the independent dominating sets of this form  are of size $n+1$.

Part $(i)$ accounts for the term $(2x)^n-2x^n$ and part $(ii)$ accounts for the term $2x^{n+1}$. \qed
\end{proof}

We have the following corollary.
\begin{corollary}
For each natural number $n$, $D_i(B_n, x)$ has only real zeros and so  is unimodal.
\end{corollary}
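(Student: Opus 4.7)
The plan is to read off the roots directly from the explicit formula established in the preceding lemma. Starting from
\[
D_i(B_n,x) = (2^n-2)x^n + 2x^{n+1},
\]
I would factor out the common term $2x^n$ (valid for $n \geq 2$) to obtain
\[
D_i(B_n,x) = 2x^n\bigl(x + (2^{n-1}-1)\bigr).
\]
This is a completely factored form over $\mathbb{R}$: the roots are $0$, with multiplicity $n$, together with the single simple root $x = 1 - 2^{n-1}$. Since all factors are linear with real coefficients, $D_i(B_n,x)$ has only real zeros, which is the first claim.

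For unimodality, the cleanest route is to invoke the Newton-inequality principle recalled in the introduction: a polynomial with nonnegative coefficients and only real zeros has a log-concave (and hence unimodal) coefficient sequence. Alternatively, one can observe the unimodality directly from the coefficient sequence itself. The coefficients of $D_i(B_n,x)$ are $a_0 = a_1 = \cdots = a_{n-1} = 0$, $a_n = 2^n - 2$, and $a_{n+1} = 2$; for $n \geq 2$ we have $2^n - 2 \geq 2$, so the sequence is nondecreasing up to index $n$ and then nonincreasing, which is exactly the definition of unimodality with mode $m = n$.

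I expect no real obstacle here: the lemma already hands us a two-term polynomial, so both conclusions reduce to elementary observations. The only thing to be slightly careful about is the base case (making sure the factorization is legitimate for the range of $n$ in which the lemma holds, namely $n \geq 2$), which is handled by noting that $2x^n$ is genuinely a common factor in that range.
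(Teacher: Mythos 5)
Your proposal is correct and is essentially the argument the paper intends: the paper states this corollary without proof as an immediate consequence of the lemma, and the intended justification is exactly your factorization $D_i(B_n,x)=2x^n\bigl(x+2^{n-1}-1\bigr)$, which exhibits only real zeros and hence unimodality via Newton's inequalities (or, as you note, by direct inspection of the two nonzero coefficients). Your remark about restricting to $n\geq 2$, the range in which the lemma is proved, is a sensible precaution that the paper glosses over.
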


The following theorem gives a formula for the independent domination polynomial of generalized book graphs.
\begin{theorem}\label{bmn}
The independent domination polynomial of the generalized book graph, $B_{n,m}$, for $n\geq 2, ~m\geq 3$ is given by
\begin{equation*}
D_i(B_{n,m},x)= (2^n-2)x^nD_i(P_{m-4},x)+2x^{n+1}D_i(P_{m-5},x) +(x^2+2x^{n+1})D_i(P_{m-6},x),
\end{equation*}
where for all  $j\leq 0,~~D_i(P_{j},x)=1$.
\end{theorem}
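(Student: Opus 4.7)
The plan is to classify every independent dominating set $S$ of $B_{n,m}$ by the restriction $S\cap\{u_1,u_{m-2}\}$, since $u_1$ and $u_{m-2}$ are the only spine vertices with page neighbors. In each case the membership of the pages $\{v_i,w_i\}$ will be essentially determined by independence and domination, and the remaining inner spine $u_2,\ldots,u_{m-3}$ will reduce to an independent dominating set problem on a sub-path of $P_{m-2}$. The three polynomials $D_i(P_{m-4},x)$, $D_i(P_{m-5},x)$, $D_i(P_{m-6},x)$ in the statement arise from exactly three different lengths of this inner sub-path.

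First, if $u_1,u_{m-2}\in S$, independence forces every $v_i,w_i$ out of $S$, the page vertices are dominated by $u_1$ and $u_{m-2}$, and the spine neighbors $u_2,u_{m-3}$ are forced out and already dominated; thus $u_3,\ldots,u_{m-4}$ must form an unrestricted independent dominating set of $P_{m-6}$, yielding $x^2 D_i(P_{m-6},x)$. If instead exactly one of $u_1,u_{m-2}$ lies in $S$, say $u_1\in S$ and $u_{m-2}\notin S$, then no $v_i$ can be in $S$ and every $w_i$ is forced in (its only neighbors $v_i$ and $u_{m-2}$ are outside $S$), which also dominates $u_{m-2}$; then $u_2$ is forced out and $u_3,\ldots,u_{m-3}$ is a free independent dominating set of $P_{m-5}$. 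Adding the symmetric case gives $2x^{n+1} D_i(P_{m-5},x)$.

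Finally, suppose $u_1,u_{m-2}\notin S$. For each $i$, since $v_i,w_i$ are adjacent and the only other neighbor of each (namely $u_1$ or $u_{m-2}$) is not in $S$, exactly one of $v_i,w_i$ lies in $S$, giving $2^n$ page configurations. If the configuration is \emph{mixed} (both some $v_i$ and some $w_j$ are in $S$), then $u_1$ and $u_{m-2}$ are dominated through the pages and the full inner spine $u_2,\ldots,u_{m-3}$ is a free independent dominating set of $P_{m-4}$, contributing $(2^n-2)x^n D_i(P_{m-4},x)$. If all $v_i\in S$ and no $w_j\in S$, then $u_{m-2}$ has no page dominator, which forces $u_{m-3}\in S$ and consequently $u_{m-4}\notin S$ (with $u_{m-4}$ now dominated by $u_{m-3}$), leaving $u_2,\ldots,u_{m-5}$ free as an independent dominating set of $P_{m-6}$. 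The symmetric all-$w$ sub-case forces $u_2\in S$ instead, and together these two contribute $2x^{n+1}D_i(P_{m-6},x)$.

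Summing the contributions of all cases produces exactly the stated identity, with the convention $D_i(P_j,x)=1$ for $j\leq 0$ absorbing the degenerate inner sub-paths. The main obstacle is the boundary bookkeeping in each case: one must verify that the forced vertices near $u_1$ and $u_{m-2}$ leave no residual domination or independence obligation that would turn the inner problem into something strictly finer than an unrestricted independent dominating set of the appropriate $P_j$, and in particular that the endpoints of each reduced sub-path are neither required nor forbidden to lie in $S$.
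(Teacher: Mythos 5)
Your proof is correct and takes essentially the same route as the paper's: both arguments partition the independent dominating sets into the same four cases determined by $S\cap\{u_1,u_{m-2}\}$ together with the page configuration, yielding the four contributions $(2^n-2)x^nD_i(P_{m-4},x)$, $2x^{n+1}D_i(P_{m-5},x)$, $2x^{n+1}D_i(P_{m-6},x)$ and $x^2D_i(P_{m-6},x)$, with your forcing arguments if anything spelled out more carefully than the paper's. The only caveat (which applies equally to the paper's proof) is the boundary issue you yourself flag: the reduction assumes the forced spine vertices $u_2,u_3,u_{m-4},u_{m-3}$ are distinct from each other and from $u_1,u_{m-2}$, so the formula requires $m$ somewhat larger than the stated $m\ge 3$ (e.g.\ at $m=4$ it disagrees with the paper's own lemma for $B_n$).
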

\begin{proof}
Let $B_{n,m}$ be the generalized book graph. It suffices to show that every independent dominating set is accounted for exactly once in the above statement.

The independent dominating sets of each size are one of the following forms:

$(i)$ Choose at least one $v_i$ and at least one $w_i$, and then choose any independent dominating set of the induced graph  $\langle u_2, u_3, \cdots, u_{m-3}\rangle$. In this case the independent dominating set contains non of the vertices $u_1$ and $u_{m-2}$. 

$(ii)$ Choose all of the $w_i$ and the vertex $u_2$ (or all of the $v_i$ and the vertex $u_{m-3}$) and then choose any independent dominating set of the  induced graph  $\langle  u_4, \cdots, u_{m-3}\rangle$ 
 $ (\langle u_2, u_3, \cdots, u_{m-5}\rangle)$. In this case the independent dominating set contains non of the vertices $u_1$ and $u_{m-2}$.

$(iii)$ Choose the vertex $u_1~ (u_{m-2})$, then choose all of the $w_i$ (or all of the $v_i$) and then choose any independent dominating set of the  induced graph  $\langle  u_3, \cdots, u_{m-3}\rangle$ 
 $ (\langle u_2, u_3, \cdots, u_{m-4}\rangle)$. In this case the independent dominating set contains one of the vertices $u_1$ or $u_{m-2}$. 

$(iv)$ Choose the vertices $u_1$ and $u_{m-2}$ and then choose any independent dominating set of the induced graph  $\langle u_3, u_4, \cdots, u_{m-4}\rangle$. In this case the independent dominating set contains  the both vertices $u_1$ and $u_{m-2}$. 

Part $(i)$ accounts for the term $(2^n-2)x^nD_i(P_{m-4},x)$, part $(ii)$ accounts for the term $2x^{n+1}D_i(P_{m-6},x)$, part $(iii)$ accounts for the term $2x^{n+1}D_i(P_{m-5},x)$, and part $(iv)$ accounts for the term $x^2D_i(P_{m-6},x)$.\qed
\end{proof}

Since $\gamma_i(P_n) = \lceil \frac{n}{2}\rceil$ for every $n \geq 1$, and by Theorem \ref{bmn} have the following result. 
\begin{corollary}
The independent domination number of the generalized book graph, $B_{n,m}$, for $n\geq 2, ~m\geq 3$  equals to 
\[ \gamma_i(B_{n,m})=\min\{\max\{n,n+\lceil\frac{m-4}{2}\rceil\!\}\!, \max\{n+1,n+1+\lceil\frac{m-5}{2}\rceil\!\}\!, \max\{2,2+\lceil\frac{m-6}{2}\rceil\!\}\}.
\]
\end{corollary}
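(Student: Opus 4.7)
\medskip
\noindent\emph{Proof proposal.} The plan is to read off the independent domination number directly from the decomposition provided by Theorem \ref{bmn}. Recall that $\gamma_i(G)$ equals the smallest exponent that appears with a nonzero coefficient in $D_i(G,x)$. So I want to find the lowest-degree term in
\[
D_i(B_{n,m},x)\;=\;(2^n-2)x^n D_i(P_{m-4},x)\;+\;2x^{n+1}D_i(P_{m-5},x)\;+\;(x^2+2x^{n+1})D_i(P_{m-6},x).
\]

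First I would observe that for $n\geq 2$ the scalar $2^n-2$ is positive, the constants $2$ and $1$ in front of the monomials $x^2, x^{n+1}$ are positive, and every polynomial $D_i(P_j,x)$ has nonnegative coefficients (the convention $D_i(P_j,x)=1$ for $j\leq 0$ is also nonnegative). Consequently, no cancellation can occur when the three summands are added, and the lowest degree of the sum is simply the minimum of the lowest degrees of the three summands.

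Next I would compute these three lowest degrees one at a time. Writing $\ell(j)$ for the lowest degree of $D_i(P_j,x)$, I have $\ell(j)=\gamma_i(P_j)=\lceil j/2\rceil$ for $j\geq 1$ by the formula invoked in the paper, and $\ell(j)=0$ for $j\leq 0$ by convention; these two cases combine to the single expression $\ell(j)=\max\{0,\lceil j/2\rceil\}$. Hence
\begin{align*}
\text{ldeg}\bigl((2^n-2)x^n D_i(P_{m-4},x)\bigr) &= n+\ell(m-4)=\max\{n,\;n+\lceil (m-4)/2\rceil\},\\
\text{ldeg}\bigl(2x^{n+1}D_i(P_{m-5},x)\bigr) &= (n+1)+\ell(m-5)=\max\{n+1,\;n+1+\lceil (m-5)/2\rceil\},\\
\text{ldeg}\bigl((x^2+2x^{n+1})D_i(P_{m-6},x)\bigr) &= 2+\ell(m-6)=\max\{2,\;2+\lceil (m-6)/2\rceil\},
\end{align*}
where in the last line I use that $n\geq 2$ implies $2\leq n+1$, so the $x^2$ piece produces the lower exponent and the $2x^{n+1}$ piece can be discarded. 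Taking the minimum of the three displayed quantities yields the formula in the statement.

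The only substantive point to check is step one, namely the absence of cancellation: since all three summands are polynomials with nonnegative coefficients, their lowest-degree contributions are added (with positive weight) rather than subtracted, so $\gamma_i(B_{n,m})$ is genuinely the minimum of the three $\text{ldeg}$ values above. Steps two and three are then routine substitutions. The bookkeeping for small $m$ (where some $m-4,m-5,m-6$ become $\leq 0$) is handled uniformly by the $\max\{0,\lceil\cdot/2\rceil\}$ expression, which is precisely what the outer $\max$ in the statement encodes.
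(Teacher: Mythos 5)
Your proposal is correct and follows exactly the route the paper intends: the paper states this corollary without a written proof, remarking only that it follows from Theorem \ref{bmn} together with $\gamma_i(P_n)=\lceil n/2\rceil$, and your argument (no cancellation among the nonnegative summands, so the lowest degree of the sum is the minimum of the three lowest degrees, each computed via $\max\{0,\lceil j/2\rceil\}$) is precisely the omitted verification. Your explicit observation that $n\geq 2$ forces the $x^2$ term to dominate the third summand is a detail worth having on record.
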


The friendship (or Dutch-Windmill) graph $F_n$ is a graph that can be constructed by the coalescence of $n$
copies of the cycle graph $C_3$ of length $3$ with a common vertex. The Friendship Theorem of Paul Erd\H{o}s,
Alfred R\'{e}nyi and Vera T. S\'{o}s \cite{erdos}, states that graphs with the property that every two vertices have
exactly one neighbour in common are exactly the friendship graphs. 
Let $n$ and $q\geq 3$ be any positive integer and  $F_{q,n}$ be the {\em generalized friendship graph}  formed by a collection of $n$ cycles (all of order $q$), meeting at a common vertex.  (see Figure \ref{figtetragons}). The generalized friendship graph may also be referred to as a flower \rm\cite{schi}. Now, we study the independent dominating sets of of the Friendship, and generalized Friendship graphs and count the number of independent dominating sets of these graths.

\begin{figure}[ht]
\hspace{2.0cm}
\includegraphics[width=10.5cm,height=2.5cm]{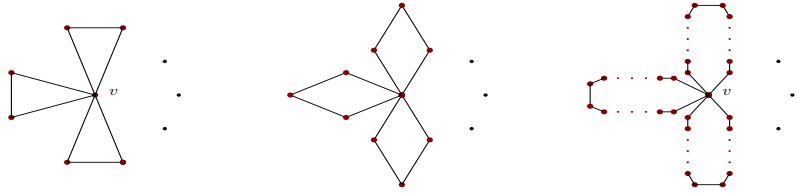}
\caption{\label{figtetragons} The flowers  $F_{n},~ F_{4,n}$ and $F_{q,n}$, respectively. }
\end{figure}

\begin{theorem}
$(a)$ The independent domination polynomial of the friendship graph, $F_{n}$,  for $n\geq 2$ is given by
\begin{equation*}
D_i(F_{n},x)= x+(2x)^n.
\end{equation*}

$(b)$ The independent domination polynomial of the generalized book graph, $F_{q,n}$, for $n\geq 2, ~q\geq 4$ is given by
\begin{equation*}
D_i(F_{q,n},x)= x(D_i(P_{q-3},x))^n+nxD_i(P_{q-3},x) (D_i(P_{q-1},x))^{n-1}.
\end{equation*}
\end{theorem}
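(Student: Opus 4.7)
The plan is to prove both parts by the same dichotomy: partition the independent dominating sets $S$ of the graph according to whether the apex vertex $v_0$ (the common vertex of all the cycles) belongs to $S$, and reduce each sub-case to an IDS enumeration on paths, invoking the formula for $D_i(P_n,x)$ established earlier in this section.

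For part (a), when $v_0\in S$ every other vertex of $F_n$ is adjacent to $v_0$ and is thus excluded, forcing $S=\{v_0\}$ and contributing $x$. When $v_0\notin S$, independence together with maximality force that in each of the $n$ triangles exactly one of the two leaves $u_i,w_i$ is selected --- at most one by independence, and at least one in order to dominate the other leaf and $v_0$ --- so the $n$ triangles contribute independently with two choices each, yielding $(2x)^n$. Summing gives $x+(2x)^n$.

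For part (b), label the $i$-th cycle $v_0, a_1^{(i)}, a_2^{(i)}, \ldots, a_{q-1}^{(i)}$. When $v_0\in S$, both neighbours $a_1^{(i)}$ and $a_{q-1}^{(i)}$ are excluded in every cycle, leaving the induced path $a_2^{(i)}a_3^{(i)}\cdots a_{q-2}^{(i)}\cong P_{q-3}$ on which $S$ must restrict to an IDS; because the deleted endpoints of this subpath are already fixed outside $S$, the IDS condition of the whole graph restricts exactly to the IDS condition of the isolated path, giving the contribution $x(D_i(P_{q-3},x))^n$. When $v_0\notin S$, the vertex $v_0$ must be dominated by a neighbour in some cycle, so I would mark a distinguished cycle supplying that domination (the factor $n$), fix one neighbour of $v_0$ in that cycle as a member of $S$ (the factor $x$), thereby forcing its path-neighbour out of $S$ and reducing the rest of that cycle to an IDS of $P_{q-3}$; each of the remaining $n-1$ cycles contributes an arbitrary IDS of its attached path $P_{q-1}$, producing the factor $(D_i(P_{q-1},x))^{n-1}$. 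Combining the two cases yields the displayed formula.

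The hard part is the second case of (b): marking a ``distinguished'' cycle naively risks over-counting configurations in which more than one cycle already supplies a neighbour of $v_0$ to $S$, so the proof must specify this marking canonically --- for example by always selecting the cycle of smallest index that contains a neighbour of $v_0$ in $S$ --- and verify that the resulting map sending an IDS to the data (distinguished cycle, its internal structure, IDS of the other cycles' paths) is a bijection onto the indicated product. A secondary technical point to check is that the reduction inside the distinguished cycle genuinely produces an IDS of $P_{q-3}$ on the $q-3$ leftover vertices; this should be tested against the boundary case $q=4$ (where $P_{q-3}=P_1$) before trusting the general identity.
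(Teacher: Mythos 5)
Your proof of part (a) is correct and is the paper's intended argument (the published proof of (a) is garbled, but it rests on the same dichotomy at the apex vertex), and your analysis of the case $v_0\in S$ in part (b) is likewise sound. The real issue is the second case of (b): the over-counting you defer as ``the hard part'' cannot be repaired, because the stated identity is false. Take $q=4$, $n=2$, with cycles $v_0a_1a_2a_3v_0$ and $v_0b_1b_2b_3v_0$. Direct enumeration gives exactly four independent dominating sets, namely $\{v_0,a_2,b_2\}$, $\{a_2,b_1,b_3\}$, $\{a_1,a_3,b_2\}$ and $\{a_1,a_3,b_1,b_3\}$, so $D_i(F_{4,2},x)=3x^3+x^4$, whereas the claimed formula yields $x\cdot x^2+2\cdot x\cdot x\cdot(x^2+x)=3x^3+2x^4$. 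The set $\{a_1,a_3,b_1,b_3\}$ places a neighbour of $v_0$ in both cycles and is counted once for each admissible choice of distinguished cycle.

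No canonical marking rescues the product structure: if you distinguish the smallest-index cycle whose trace meets $N(v_0)$, then the lower-index cycles must carry independent dominating sets of their paths $P_{q-1}$ avoiding both endpoints while the higher-index cycles remain unrestricted, so the case $v_0\notin S$ actually contributes $(D_i(P_{q-1},x))^n-(\tilde D(x))^n$, where $\tilde D(x)$ enumerates the maximal independent sets of $P_{q-1}$ containing neither endpoint; for $q=4$ one has $\tilde D(x)=x$ and $x^3+(x^2+x)^2-x^2=3x^3+x^4$, in agreement with the enumeration. This is not of the form $nxD_i(P_{q-3},x)(D_i(P_{q-1},x))^{n-1}$. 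There is a second, independent defect in that term: the factor $xD_i(P_{q-3},x)$ enumerates only those maximal independent sets of the distinguished path that contain one prescribed neighbour of $v_0$, so for $q\geq 5$ configurations dominating $v_0$ only through the other neighbour (such as $\{a_2,a_4\}$ in a $5$-cycle) are never assigned to that cycle; for $q=5$, $n=2$ the formula gives $4x^3+12x^4$ against the true value $4x^3+9x^4$. The paper's own proof of (b) commits exactly the error you identified, so your instinct to test the boundary case was right --- but the test refutes the theorem rather than confirming it.
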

\begin{proof}
$(a)$ Consider graph $F_n$ and the vertex $v$ in the common cycles (see Figure \ref{figtetragons}). First assume that the vertex $v$ is contained in the independent dominating set, then the independent dominating sets don't contain $v$.

$(b)$  Consider graph $F_{q,n}$ and the vertex $v$ in the common cycles (see Figure \ref{figtetragons}). 
It suffices to show that every independent dominating set, $S$, is accounted for exactly once in the above statement.

The independent dominating sets of each size are one of the following forms:

$(i)$ If the vertex $v\in S$, then other members in $S$ are chosen of the vertex set of each cycle minus  $N[v]$.

$(ii)$ If the vertex $v\notin S$, so at least one vertex of $N(v)$ in one of cycles, $C_1$, is contained in $S$ (say $x\in S$), then   other members in $S$ are chosen of the vertex set of each cycles else $C_1$ minus $v$ and from   $V(C_1)\smallsetminus N[x]$.

Part $(i)$ accounts for the term $x(D_i(P_{q-3},x))^n$, and part $(ii)$ accounts for the term $nxD_i(P_{q-3},x) (D_i(P_{q-1},x))^{n-1}$.\qed
\end{proof}

Note that for each natural number $n$, $D_i(F_n, x)$ has only real zeros, and so  $D_i(F_n, x)$ is unimodal.


\bigskip

\noindent{\bfseries{Acknowledgements}}
	The authors acknowledge the financial support from  Iran National Science Foundation (INSF),  Tehran and Yazd University research affairs (research project INSF-YAZD 96010014).

\end{document}